\documentclass[12pt]{amsart}

\usepackage{cancel}
\usepackage[normalem]{ulem}

\usepackage{amsthm,amsmath,amstext,amsfonts,amssymb}
\usepackage{fullpage}
\usepackage{url}
\usepackage{txfonts}
\usepackage{mathrsfs}
\usepackage[osf]{mathpazo}
\usepackage{xcolor}
\usepackage{graphicx}
  \usepackage[all]{xy}
 
\usepackage{marginnote}
\setlength{\marginparwidth}{2cm}

\usepackage[pdfauthor={Renaud Coulangeon and Achill Schürmann},
            pdftitle={Energy minimization, periodic sets and spherical designs - Part II}]{hyperref}

\newcommand{\Z}{\mathbb{Z}}
\newcommand{\R}{\mathbb{R}}

\newcommand{\C}{\mathbb{C}}

\DeclareMathOperator{\Tr}{\operatorname{Tr}}
\DeclareMathOperator{\GL}{\operatorname{GL}}
\DeclareMathOperator{\SL}{\operatorname{SL}}
 
\newcommand{\Aut}{\operatorname{Aut}}
\newcommand{\Isom}{\operatorname{Isom}}

\newcommand{\Sn}{{\mathcal S}^{n}}
\DeclareMathOperator{\id}{Id}

\DeclareMathOperator{\grad}{grad}
\DeclareMathOperator{\hess}{hess}

\def\norm#1{\left\Vert#1\right\Vert}
\def\set#1{\left\{#1\right\}}
\def\br#1{\left[#1\right]}
\def\pr#1{\left(#1\right)}

\newcommand{\eg}{{\it e.g. }}
\newcommand{\ie}{{\it i.e. }}

\newcommand{\lc}{{\it loc. cit. }}

\renewcommand{\Im}{\operatorname{Im}}

\theoremstyle{plain}

\newtheorem{theorem}{Theorem}[section]
\newtheorem{proposition}[theorem]{Proposition}

\newtheorem{lemma}[theorem]{Lemma}

\theoremstyle{definition}
\newtheorem{defn}[theorem]{Definition} 

\theoremstyle{definition}
\newtheorem*{defn*}{Definition} 

\theoremstyle{remark}
\newtheorem{rem}[theorem]{Remark}

\theoremstyle{remark}

\theoremstyle{remark}

\theoremstyle{remark}
\newtheorem*{ex*}{Example}
\theoremstyle{remark}
\newtheorem*{exs*}{Examples}

\begin{document}

\title{Local Energy Optimality of Periodic Sets}

\author[R.~Coulangeon and A.~Sch\"urmann]{Renaud Coulangeon and Achill Sch\"urmann}

\keywords{energy minimization, universal optimality, periodic sets} 

\subjclass[2010]{82B, 52C, 11H}



\address{%
Universit\'e de Bordeaux, 
Institut de Math\'ematiques, 
351, cours de la 
\linebreak\indent 
Lib\'e\-ration, 
33405 Talence cedex, France} 
\email{Renaud.Coulangeon@math.u-bordeaux1.fr}

\address{%
Institute for Mathematics, 
University of Rostock,
18051 Rostock,
Germany}
\email{achill.schuermann@uni-rostock.de}

\begin{abstract}
We study the local optimality of periodic point sets in $\R^n$ for 
energy minimization in the Gaussian core model, that is, 
for radial pair potential functions $f_c(r)=e^{-c r}$ with $c>0$.
By considering suitable parameter spaces for $m$-periodic sets, 
we can locally rigorously analyze the energy of point sets, within the family of 
periodic sets having the same point density.
We derive a characterization of periodic point sets being
$f_c$-critical for all $c$ in terms of weighted spherical $2$-designs
contained in the set.
Especially for $2$-periodic sets like the family $\mathsf{D}^+_n$
we obtain expressions for the hessian of the energy function,
allowing to certify $f_c$-optimality in certain cases.
For odd integers $n\geq 9$ we can hereby in particular show that $\mathsf{D}^+_n$
is locally $f_c$-optimal among periodic sets for all sufficiently
large~$c$.
\end{abstract}

\maketitle

\setcounter{tocdepth}{1}
\tableofcontents

\section{Introduction}

Point configurations which minimize energy for a given pair                     
potential function occur in diverse branches of mathematics and its applications.
There are various numerical approaches to find locally stable configurations.
However, in general, proving optimality of a point configuration
appears hardly possible, except maybe for some very special sets.

In \cite{MR2257398} Cohn and Kumar introduced the notion of a {\em universally
optimal point configuration}, that is, a set of points in a given space,
which minimizes energy for all completely monotonic potential functions.
There exist several fascinating examples among spherical point sets.
However, considering infinite point sets in Euclidean spaces is more
difficult. Even a proper definition of potential energy bears subtle convergence problems.
For \textit{periodic sets} such problems can be avoided,
so that these point configurations are the ones usually considered in
the Euclidean setting. 
When working with local variations of periodic sets it is convenient to
work with a parameter space up to translations and orthogonal
transformations, as introduced in \cite{MR2466406}.
With it, a larger experimental study of energy minima 
among periodic sets in low dimensions ($n\leq 9$) 
was undertaken 
in the {\em Gaussian core model}, that is, 
for potential functions $f_c(x) := e^{ - c \, x}$, with $c > 0$
(see \cite{PhysRev09}). 
These experiments support a conjecture of Cohn and Kumar that the 
hexagonal lattice $\mathsf{A}_2$ in dimension~$2$ and the root lattice $\mathsf{E}_8$ 
in dimension~$8$ are universally optimal among periodic sets in their dimension.
Somewhat surprising, the numerical experiments also suggest that the 
root lattice $\mathsf{D}_4$ in dimension $4$ is universally optimal.
Since proving global optimality seemed out of reach, 
we considered a kind of local universal optimality among periodic sets
in \cite{MR2889159}.
We showed that lattices whose shells are spherical $4$-designs and which
are locally optimal among lattices can not locally be improved to another
periodic set with lower energy.
By a result due to Sarnak and Str\"ombergsson \cite{MR2221138}, this implies
local universal optimality among periodic sets 
for the lattices $\mathsf{A}_2$, $\mathsf{D}_4$ and $\mathsf{E}_8$, as well as for the 
exceptional Leech lattice $\Lambda_{24}$. 
A corresponding result for the ``sphere packing case'' $c\to \infty$
is shown in~\cite{MR3074813}.

In all other dimensions the situation is much less clear.
In dimension $3$, for instance, there is a small intervall
for $c$ with a {\em phase transition}, for which periodic point-configurations 
seem not to minimize energy at all.
For all larger~$c$ the fcc-lattice (also known as $\mathsf{D}_3$) and for all smaller~$c$ the
bcc-lattice (also known as $\mathsf{D}^\ast_3$) appear to be energy minimizers. 
Similarly, there appear to be no universal optima in dimensions $5$, $6$ and~$7$. 
Contrary to a conjecture of Torquato and
Stillinger from 2008~\cite{PhysRev08}, there even
seem to be various non-lattice configurations which  
minimize energies in each of these dimensions.
Quite surprising, the situation appears to be very different in dimension~$9$:
According to our numerical experiments
it is possible that there exists a universally optimal $2$-periodic
(non-lattice) set in dimension~$9$. This set, known as $\mathsf{D}^+_9$,
is a union of two translates of the root lattice $\mathsf{D}_9$.
From the viewpoint of energy minimization, respectively our numerical experiments, 
$\mathsf{D}^+_9$ seems almost of a similar nature as the
exceptional lattice structures $\mathsf{E}_8$ and $\Lambda_{24}$.
However, its shells are only spherical $3$-designs (and not
$4$-designs), which makes a major difference for our proofs.
The purpose of this paper is to shed more light onto 
the energy minimizing properties of $\mathsf{D}^+_9$
and similar periodic non-lattice sets that might exist in other dimensions.
Here, we in particular derive criteria for $f_c$-critical 
periodic point sets (Theorem~\ref{th:periodic-critical}) and we show that
$\mathsf{D}^+_9$ is locally $f_c$-optimal for all sufficiently
large~$c$ (Theorem~\ref{thm:final}).

\medskip

Our paper is organized as follows: In Section~\ref{sec:pps} we collect
some necessary preliminary remarks on periodic sets, in particular 
about their representations, their symmetries
and attached average theta series.
In Section~\ref{sec:two} we define the $f$-potential energy of a
periodic set and show how it can be expanded in the neighborhood of 
a given $m$-periodic representation. Section~\ref{sec:critical} gives necessary and
sufficient conditions for a periodic set to be an $f_c$-critical
configuration for all $c>0$. 
We provide a simplification for the expression of energy for
the special case of $2$-periodic sets in Section~\ref{sec:2periodic}. 
This can in particular be applied 
to the sets $\mathsf{D}^+_n$, which we describe in more
detail in Section~\ref{sec:four}.
In Section~\ref{sec:hessian} we obtain all necessary ingredients  
to show that $\mathsf{D}^+_n$ for odd $n\geq 9$ is locally
$f_c$-optimal for all sufficiently large $c$.
In our concluding Section~\ref{sec:conclusion} we also explain how this
result could possibly be extended, to prove at least locally a kind of
universal optimality of the set~$\mathsf{D}^+_9$.

\section{Preliminaries on periodic sets}\label{sec:pps}

We record in this section some preliminary remarks about periodic
sets. These may be of interest in their own, but will in particular 
be useful in subsequent computations. 
The first of these remarks is about minimal representations of periodic sets.

\begin{defn}\label{period} 
A \emph{periodic set} in $\R^n$ is a closed discrete subset $\Lambda$ of $\R^n$ which is invariant under translations by all the vectors of a full dimensional lattice $L$ in $\R^n$, that is
\begin{equation}\label{eqn:translat} 
\Lambda + L =\Lambda.
\end{equation}

A lattice for which \eqref{eqn:translat} holds is called a \emph{period lattice for $\Lambda$}.
\end{defn}

If \eqref{eqn:translat} holds, then the quotient $\Lambda/L$ is discrete and compact, hence finite. From this we can derive an alternative definition of a periodic set in $\R^n$, as a set of points which can be written as a union of finitely many cosets of a full-rank lattice $L$, \ie
\begin{equation} \label{eqn:periodic} 
\Lambda = \bigcup_{i=1}^m\left(  t_i + L\right)
\end{equation}
for some vectors $t_1, \dots, t_m$ in $\R^n$, which we assume to be pairwise incongruent $\mod L$. In that case we say that $\Lambda$ is $m$-periodic. 

Note that closedness is necessary in Definition~\ref{period}, as shown
by the counterexample 
$\Lambda=\bigcup_{n \in
  \mathbb{N}^*}\left(\frac{1}{n}+\mathbb{Z}\right)$ 
which is invariant under translations by $\Z$ but not of the
form~\eqref{eqn:periodic} for any $m$.

\subsection*{Representations}

We call the set of data, \ie a lattice $L$ together with a collection
$\mathbf{t}=\left(t_1,\ldots,t_m\right)$ of translational vectors, a
\textit{representation} of $\Lambda$, which we write
$\left(L,\mathbf{t}\right)$ for short. 
A given periodic set $\Lambda$ admits  infinitely many period lattices and representations, in which the number $m=\left\vert\Lambda\slash L\right \vert$ varies. For instance one can replace $L$ by any of its sublattice $L'$ and obtain a representation  as a union of  $m\left[L :L' \right]$ translates of $L'$, as in the example in Figure~\ref{fig:fig1}, where the same set is represented as a $4$ and $8$-periodic set.

\begin{figure}[h]\label{fig:fig1} 
        \centering
        \includegraphics[scale=0.1]{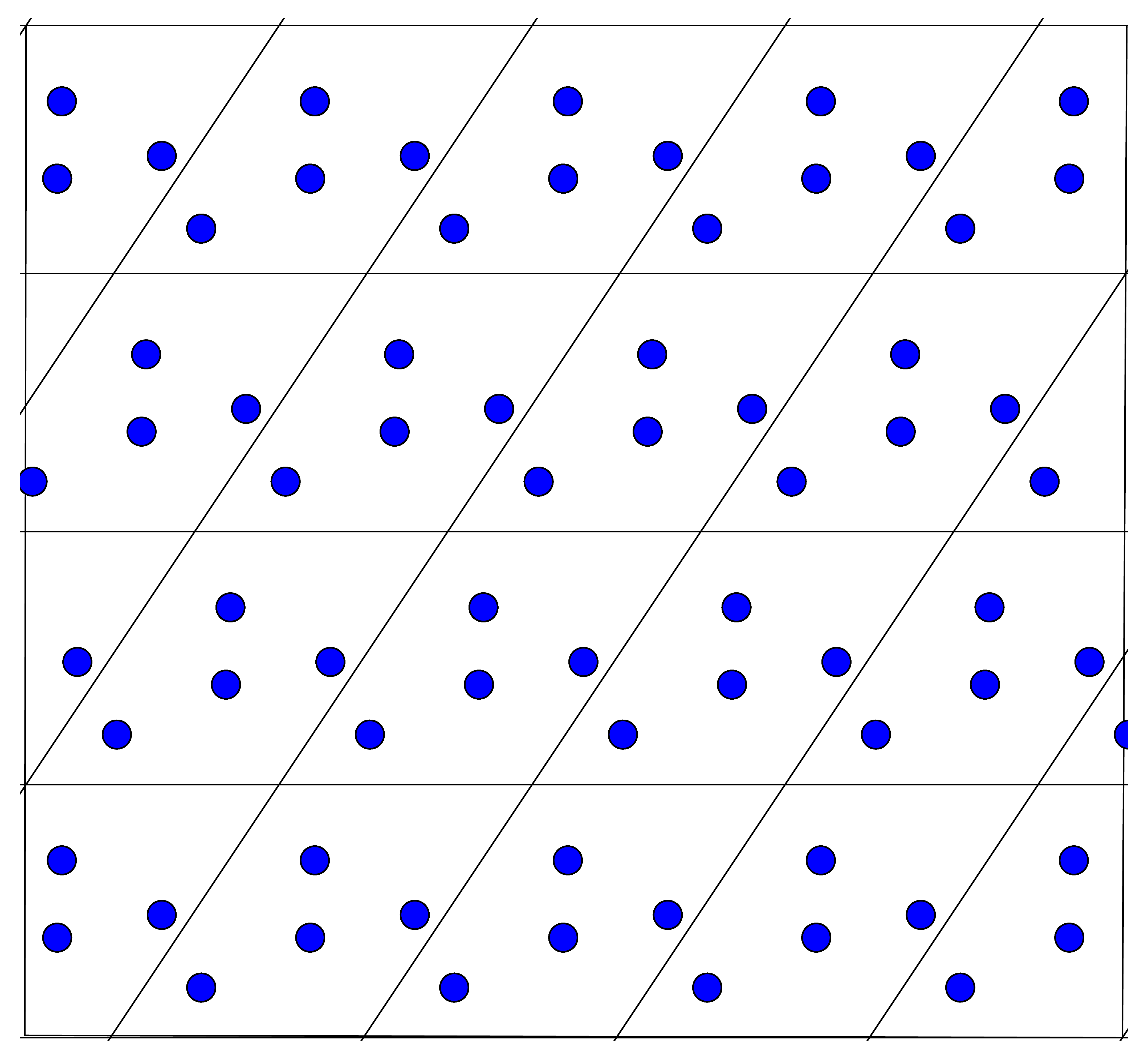} \hspace{1cm} \includegraphics[scale=0.1]{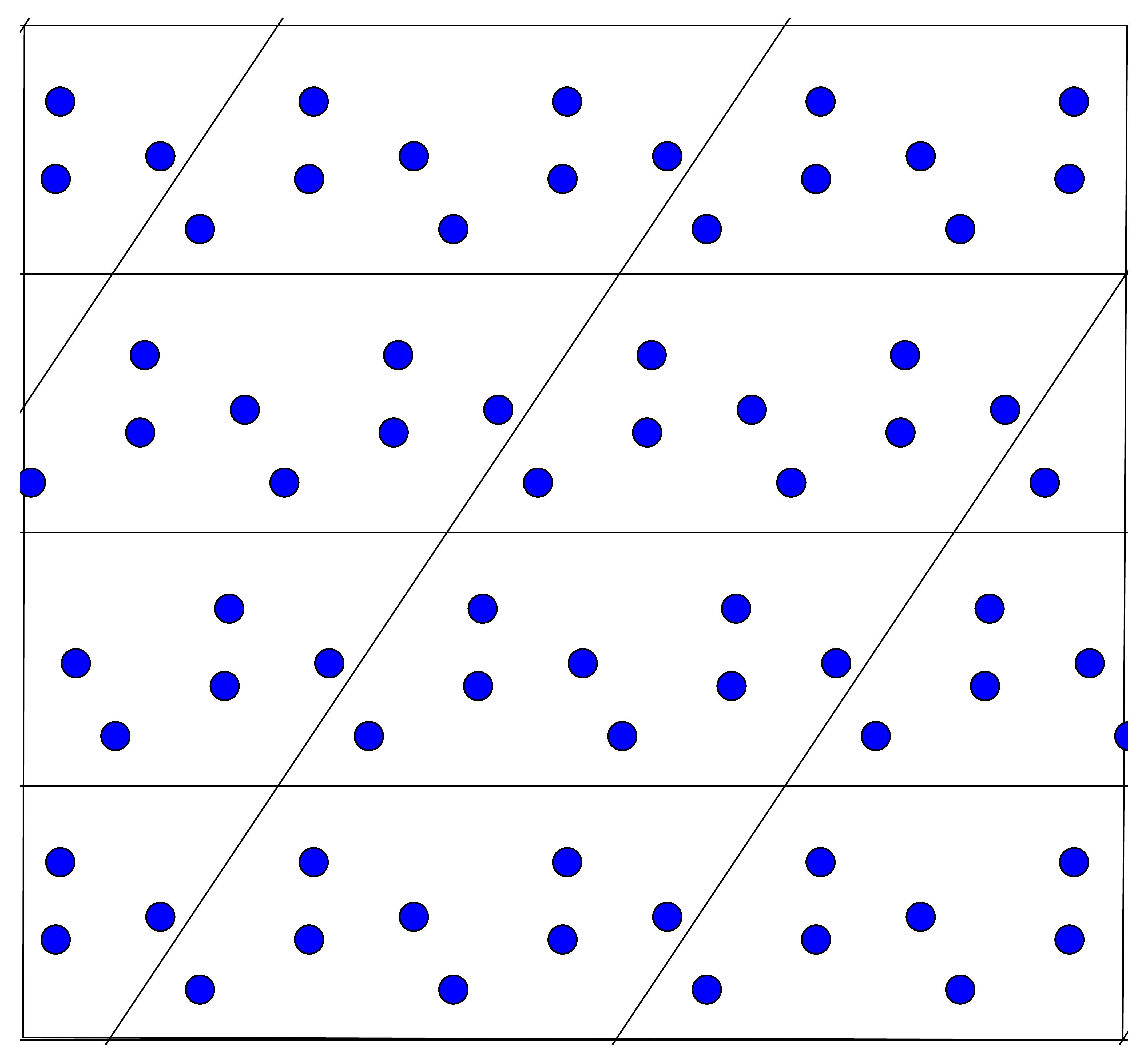}   
        \caption{}
\end{figure}

However, the set of period lattices, which is partially ordered by inclusion, admits a maximum $L_{\max}$, which we call the \emph{maximal period lattice of $\Lambda$} (see Proposition \ref{lmax} below), corresponding to an essentially unique \emph{minimal} representation of $\Lambda$ (\ie with a minimal number of cosets).

Note also that the \emph{point density} $p\delta(\Lambda)=\dfrac{m}{\sqrt{\det L}}$ of a periodic set $\Lambda$, which counts the "number of points per unit volume of space", does not depend on the choice of a representation. When studying properties which are invariant by scaling, we restrict to periodic sets with fixed point density.

We will be interested in quantities, such as energy, which depend only on the pairwise differences of elements of $\Lambda$ (see Definition \ref{pnrj} below). For any $x$ in $\Lambda$, we define the \textit{difference set $\Lambda_x$ of $x$} as the translate of $\Lambda$ by the vector $-x$: 
\begin{equation}
\Lambda_x \coloneqq\Lambda-x=\set{y-x \mid y \in \Lambda}.
\end{equation}
Two points $x$ and $y$ in $\Lambda$ have the same difference set if and only if $\Lambda$ is invariant under the translation by $x-y$. This is the case in particular if $x$ and $y$ are congruent modulo a period lattice of $\Lambda$.  
The following proposition shows that the number $m(\Lambda)$ of distinct difference sets $\Lambda_x$ as $x$ runs through $\Lambda$  is equal to the minimal number of cosets needed to represent $\Lambda$ as a periodic set, \ie the cardinality of the quotient of $\Lambda$ by its maximal period lattice:

\begin{proposition}\label{lmax}  Let $\Lambda$ be a periodic set in $\R^n$, and let $m(\Lambda)$ be the number of distinct difference sets $\Lambda_x$ as $x$ runs through $\Lambda$. Then the following holds:
\begin{enumerate}
\item\label{uno}  For every period lattice $L$ of $\Lambda$ one has 
\begin{equation*}
\left\vert\Lambda\slash L\right \vert \geq m(\Lambda)
\end{equation*}
with equality if and only if $L$ is maximal with respect to inclusion among period lattices of~$\Lambda$. 
\item\label{dos}  There exists a unique period lattice $L_{\max}$ containing all period lattices of $\Lambda$, defined as
$$ L_{\max}=\left\lbrace  v \in  \R^n \mid v + \Lambda = \Lambda \right\rbrace .$$
We call it the \emph{maximal period lattice} of $\Lambda$.
It corresponds to an essentially unique minimal representation of
$\Lambda$ as a union of 
$m(\Lambda) = \left \vert\Lambda\slash L_{\max}\right \vert $ 
translates of $L_{\max}$ (up to the choice of representatives modulo $L_{\max}$ and reordering). 
\item \label{tres} For $x$ and $y$ in $\Lambda$ one has
$$\Lambda_x = \Lambda_y \Leftrightarrow x \equiv y \!\!\mod L_{\max}.$$
\end{enumerate}
\end{proposition}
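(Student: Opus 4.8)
My plan is to build the maximal period lattice by hand as the full translation stabilizer of $\Lambda$, to show it is genuinely a lattice by invoking discreteness of $\Lambda$, and then to obtain parts~\ref{uno} and~\ref{tres} as essentially formal consequences. Concretely I would establish part~\ref{dos} first, then part~\ref{tres}, and deduce part~\ref{uno} last.

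For part~\ref{dos}, set $L_{\max} := \{v \in \R^n : v + \Lambda = \Lambda\}$. This is clearly a subgroup of $(\R^n,+)$, and every period lattice $L$ of $\Lambda$ is contained in it, since $L + \Lambda = \Lambda$ says precisely that each element of $L$ stabilizes $\Lambda$ by translation. The one genuine point is that $L_{\max}$ is a lattice and not just a subgroup: fixing any $x_0 \in \Lambda$ (nonempty, as $\Lambda$ contains a coset of $L$), for $v \in L_{\max}$ we get $v + x_0 \in v + \Lambda = \Lambda$, so $L_{\max} \subseteq \Lambda - x_0$; since $\Lambda$ is discrete and closed, so is $\Lambda - x_0$, hence $L_{\max}$ is a discrete subgroup of $\R^n$, i.e.\ a lattice of rank at most $n$, and since it contains the full-rank lattice $L$ it has rank exactly $n$. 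By its very definition $L_{\max}$ is itself a period lattice and contains all others, which gives both existence and uniqueness of a maximal period lattice.

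Part~\ref{tres} is then immediate from the definition: for $x, y \in \Lambda$, the equality $\Lambda_x = \Lambda_y$ means $\Lambda - x = \Lambda - y$, equivalently $\Lambda + (x-y) = \Lambda$, equivalently $x - y \in L_{\max}$. For part~\ref{uno}, part~\ref{tres} shows that $x \mapsto \Lambda_x$ descends to a bijection from $\Lambda/L_{\max}$ onto the set of distinct difference sets, so — $L_{\max}$ being a full-rank period lattice, which makes $\Lambda/L_{\max}$ finite — we get $m(\Lambda) = |\Lambda/L_{\max}|$. For an arbitrary period lattice $L$ we have $L \subseteq L_{\max}$ by part~\ref{dos}, hence $|\Lambda/L| = [L_{\max}:L]\,|\Lambda/L_{\max}| = [L_{\max}:L]\,m(\Lambda) \geq m(\Lambda)$, with equality exactly when $[L_{\max}:L]=1$, that is $L = L_{\max}$, that is $L$ maximal among period lattices. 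The claim on the minimal representation follows: minimality of $(L,\mathbf{t})$ forces $|\Lambda/L| = m(\Lambda)$, hence $L = L_{\max}$, and then $t_1,\dots,t_m$ must be a complete set of representatives of $\Lambda/L_{\max}$, unique up to reordering and up to translates by elements of $L_{\max}$.

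The only step that is not pure bookkeeping is the discreteness argument showing $L_{\max}$ is a lattice, and I expect this to be the main (indeed essentially the only) obstacle: without the hypothesis that $\Lambda$ is discrete, the a priori possibly dense group $L_{\max}$ need not be a lattice at all (take $\Lambda=\R^n$). Granted that, everything reduces to the definition of $L_{\max}$ together with the standard fact that a discrete subgroup of $\R^n$ is a lattice, of full rank as soon as it contains a full-rank one.
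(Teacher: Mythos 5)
Your proof is correct, but it takes a genuinely different route from the paper's. You construct $L_{\max}$ in one stroke as the full translation stabilizer $\{v \in \R^n : v+\Lambda=\Lambda\}$, prove directly that it is a full-rank lattice by embedding it in the translate $\Lambda - x_0$ and invoking the structure theorem for discrete subgroups of $\R^n$, and then let parts~1 and~3 fall out formally via the index formula $|\Lambda/L| = [L_{\max}:L]\, m(\Lambda)$. The paper goes the other way: it proves part~1 first, showing that whenever $|\Lambda/L| > m(\Lambda)$ there exist $x \not\equiv y \bmod L$ with $\Lambda_x = \Lambda_y$, so that $L' = L + \Z(x-y)$ is a strictly larger period lattice (discreteness enters in exactly the same way as in your argument, via $L'$ being contained in a translate of $\Lambda$); it then obtains $L_{\max}$ in part~2 by iterating this enlargement, with uniqueness coming from the observation that the sum of two period lattices is again a period lattice. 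Both arguments hinge on the same single non-formal ingredient --- a subgroup of $\R^n$ contained in a translate of the closed discrete set $\Lambda$ is a lattice --- so the difference is one of organization: your top-down construction is shorter and turns the characterization $L_{\max}=\{v : v+\Lambda=\Lambda\}$ into the definition rather than a consequence, whereas the paper's bottom-up argument only ever needs to adjoin one vector at a time and makes more explicit where the ``extra'' translations come from when a representation fails to be minimal.
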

\begin{proof} 
\ref{uno}. As already noticed, two elements of $\Lambda$ which are congruent modulo a period lattice $L$ have the same difference set, so that $m(\Lambda)$ is at most $\left\vert\Lambda\slash L\right \vert$. If $L$ is not maximal, then there exists a period lattice $L'$ containing $L$ with finite index and we have 
$$ \left\vert\Lambda\slash L\right \vert = \left[L':L\right]\left\vert\Lambda\slash L'\right \vert \geq  \left[L':L\right] m(\Lambda) > m(\Lambda).$$
Conversely, if $\left\vert\Lambda\slash L\right \vert > m(\Lambda)$, then there are at least two elements $x$ and $y$ in $\Lambda$ which are not congruent modulo $L$ and have the same difference sets. Then  $\Lambda-x=\Lambda-y$, so that $\Lambda +(x-y)=\Lambda=\Lambda+(y-x)$ and more generally, $\Lambda$ is stable under translation by any vector in $\Z(x-y)$. The group $L'\coloneqq L +\Z(x-y)$ is discrete (it is contained in a translate of the discrete set $\Lambda$) hence a full dimensional lattice in $\R^n$  strictly containing $L$,  and since $\Lambda + L' =\Lambda$, it is indeed a period lattice of $\Lambda$.

\ref{dos}. Starting from any period lattice $L$, we can enlarge it using the construction described above as long as $\left\vert\Lambda\slash L\right \vert > m(\Lambda)$. The process ends up with a maximal period lattice. Since the sum $L+L'$ of two period lattices $L$ and $L'$ for $\Lambda$ is again a period lattice containing $L$ and $L'$, we see that such a maximal period lattice is unique, and contains all period lattices. It is also clear from its construction that it consists precisely of the vectors $v$ in the ambient space such that $v+\Lambda=\Lambda$.

\ref{tres}. This follows since $\Lambda_x = \Lambda_y \Leftrightarrow \Lambda-x = \Lambda-y \Leftrightarrow (x-y)+\Lambda=\Lambda$.

\end{proof}

For a given representation $\Lambda = \bigcup_{i=1}^m\left(t_i + L\right)$ of a periodic set $\Lambda$, the set "$\Lambda - \Lambda$" of pairwise differences of elements of $\Lambda$ can be described as 
$$\Lambda - \Lambda= \bigcup_{1 \leq i \leq m}\Lambda_{t_i}.$$
As an ordinary set, it does not depend on the choice of a representation $\left(L,\mathbf{t}\right)$, but it does as a "multiset", since the difference of two elements of $\Lambda$ may occur in several difference sets $\Lambda_{t_i}$. Moreover, the number of difference sets to which a given element of $\Lambda - \Lambda$ belongs depends on the representation chosen.
To eliminate this dependency, we define a weight function $\nu$ on $\Lambda - \Lambda$, setting
\begin{equation}
\nu(w)=\frac{1}{m}\left| \left\lbrace i \mid w \in \Lambda_{t_i}\right\rbrace\right|.
\end{equation}
This definition is independent of the choice of a representation of $\Lambda$, namely one has
\begin{equation}\label{poids} 
\nu(w)=\frac{1}{m(\Lambda)}\left| \left\lbrace x \in \mathcal{R}\mid w
    \in \Lambda_x\right\rbrace\right|
,
\end{equation}
where $\mathcal{R}$ is a set of representatives of $\Lambda \!\!\mod L_{\max}$.

Note also that $\nu(w)=1$ if and only if  $w \in L_{\max}$. Indeed,
$w$ has weight $1$ if and only if it belongs to all difference sets
$\Lambda_x$: It is clearly the case if $w \in L_{\max}$, and
conversely, if $w$ belongs to 
$\displaystyle \bigcap_{x \in \mathcal{R}}\Lambda_x$, 
then there exists a permutation $\sigma$ of $\mathcal{R}$ such that 
$$\forall x \in \mathcal{R}, \  w +x \equiv \sigma(x) \!\!\mod L_{\max}$$
which implies that $w+\Lambda=\Lambda$, so that $w \in L_{\max}$.

Note also, in the same spirit, the following two observations: 
\begin{itemize}
\item if $m(\Lambda)=1$, \ie if $\Lambda$ is a translate of a lattice, then one has $\nu(w)= 1$ for all $w \in \Lambda - \Lambda=\Lambda$.
\item if $m(\Lambda)=2$, then one has $\nu(w)= 1$ or $\frac{1}{2}$
  according to $w$ belonging to the maximal period lattice of $\Lambda$ or not.
\end{itemize}

\subsection*{Symmetries}

We continue this preliminary section with some considerations on
automorphisms. 
To a lattice $L$ in $\R^n$ one associates the group $\Aut L$ of its 
\emph{linear} automorphisms defined as
\begin{equation}
\Aut L=\left\lbrace f \in O(\R^n) \mid f(L) =L\right\rbrace.
\end{equation}
 For a more general periodic set $\Lambda$, the natural group of
 transformations to consider is the group 
$\Isom\Lambda$ of \emph{affine isometries} preserving it. 
If $f$ is such an affine isometry, then its associated orthogonal 
 automorphism $\bar{f}$, defined by the property that 
 $\bar{f}(x-y)=f(x)-f(y)$ for all $x$ and $y$ in $\R^n$, stabilizes the 
 maximal period lattice $L_{\max}$. Indeed, for every $\ell \in L_{\max}$, one has 
 $$\bar{f}(\ell)+\Lambda=\bar{f}(\ell)+f(\Lambda)=f(\ell+\Lambda)=f(\Lambda)=\Lambda,$$
  whence $\bar{f}(\ell) \in L_{\max}$, by the very definition of $L_{\max}$. 
  
 We denote by $\Aut \Lambda$ the image of $\Isom \Lambda$ in $\Aut L_{\max}$, \ie  the subgroup of $\Aut L_{\max}$ consisting of all maps 
 $\bar{f}$ as $f$ runs through $\Isom \Lambda$, and call it the
\emph{group of orthogonal automorphisms of $\Lambda$}.
%

Two affine isometries of $\Lambda$ with the same associated 
orthogonal automorphism  $\bar{f}$ differ by a translation by a vector in 
$L_{\max}$. Therefore, we get the following short exact sequence
\begin{equation}
\begin{array}{rcrlcl}
1 \longrightarrow & L_{\max} \longrightarrow & \Isom \Lambda \longrightarrow &\Aut \Lambda \longrightarrow &1 \\
& & f \longmapsto & \bar{f} & 
\end{array}
\end{equation}
which is no split in general (it is split for instance when $\Lambda$ is a lattice). Disregarding translations by $L_{\max}$, the main object of interest is thus the group $\Aut \Lambda$ of orthogonal automorphisms which we now characterize:

\begin{lemma}\label{auto} 
Let $\Lambda= \bigcup_{i=1}^{m} t_i + L_{\max}$ be an $m$-periodic set in $\R^n$ given by 
a minimal representation. Let $\Isom \Lambda$ be the group of its
affine isometries and 
$\Aut \Lambda=\left\lbrace \bar{f} \mid f \in \Isom \Lambda
\right\rbrace \subseteq \Aut L_{\max}$ 
be the group of its orthogonal automorphisms. Then:

\begin{enumerate}
	\item\label{perm} For every $f \in \Isom \Lambda$ there exists a unique
	permutation 
	$\sigma \in \mathfrak{S}_m$ such that 
	\begin{equation*}
	f(t_i) \equiv t_{\sigma(i)} \!\!\mod L_{\max} \text{ for all } i \in \left\lbrace 1, \dots, m 
	\right\rbrace.
	\end{equation*}
	\item \label{aut} An element $\varphi \in \Aut L_{\max}$ belongs to $\Aut \Lambda$ if and only if
	\begin{equation}\label{orthaut}
\exists \sigma \in \mathfrak{S}_m \text{ s.t. } \varphi(t_i - t_1)  \equiv t_{\sigma(i)}-t_{\sigma(1)} \!\!\mod L_{\max} \text{ for all } i \in \left\lbrace 1, \dots, m 
\right\rbrace
	\end{equation}
	in which case it is associated to the affine isometry $x \mapsto \varphi(x-t_1)+t_{\sigma(1)}$.
 
\end{enumerate}
 
\end{lemma}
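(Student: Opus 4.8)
The plan is to use the fact established just above, that the linear part $\bar{f}$ of any $f\in\Isom\Lambda$ stabilizes $L_{\max}$, together with the defining feature of a minimal representation that the $t_i$ are pairwise incongruent modulo $L_{\max}$.

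For part~\ref{perm}, I would write an affine isometry $f\in\Isom\Lambda$ as $x\mapsto \bar{f}(x)+v$ with $\bar{f}\in O(\R^n)$ and $v\in\R^n$. Since $\bar{f}(L_{\max})=L_{\max}$, for each $i$ the image $f(t_i+L_{\max})=f(t_i)+\bar{f}(L_{\max})=f(t_i)+L_{\max}$ is a single coset of $L_{\max}$, and it is contained in $f(\Lambda)=\Lambda$. As $\Lambda$ is the disjoint union of the cosets $t_j+L_{\max}$, $1\le j\le m$, this coset must coincide with exactly one of them; calling its index $\sigma(i)$ gives $f(t_i)\equiv t_{\sigma(i)}\pmod{L_{\max}}$, and $\sigma(i)$ is the unique such index by pairwise incongruence. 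To see that $\sigma\in\mathfrak{S}_m$, I note that $\sigma(i)=\sigma(j)$ forces $f(t_i)\equiv f(t_j)$, hence $t_i-t_j=\bar{f}^{-1}\bigl(f(t_i)-f(t_j)\bigr)\in\bar{f}^{-1}(L_{\max})=L_{\max}$, so $i=j$.

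For the forward implication of part~\ref{aut}, if $\varphi=\bar{f}$ then subtracting the relation of part~\ref{perm} for $i=1$ from the one for general $i$ yields $\varphi(t_i-t_1)=f(t_i)-f(t_1)\equiv t_{\sigma(i)}-t_{\sigma(1)}\pmod{L_{\max}}$, again using $\bar{f}(L_{\max})=L_{\max}$. Conversely, given $\varphi\in\Aut L_{\max}$ and a permutation $\sigma$ satisfying \eqref{orthaut}, I would introduce the affine isometry $g\colon x\mapsto \varphi(x-t_1)+t_{\sigma(1)}$, whose associated orthogonal automorphism is $\varphi$, and verify $g(\Lambda)=\Lambda$: since $\varphi(L_{\max})=L_{\max}$ one has $g(t_i+L_{\max})=\varphi(t_i-t_1)+t_{\sigma(1)}+L_{\max}=t_{\sigma(i)}+L_{\max}$ by \eqref{orthaut}, so $g(\Lambda)=\bigcup_i\bigl(t_{\sigma(i)}+L_{\max}\bigr)=\bigcup_j\bigl(t_j+L_{\max}\bigr)=\Lambda$ because $\sigma$ permutes the indices. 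Hence $g\in\Isom\Lambda$ and $\varphi=\bar{g}\in\Aut\Lambda$.

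No step here is genuinely difficult; the only point needing a little attention is checking in part~\ref{perm} that the assignment $i\mapsto\sigma(i)$ is a bijection and not merely a well-defined map, and this is exactly where minimality of the representation — pairwise incongruence of the $t_i$ and invertibility of $\bar{f}$ on $L_{\max}$ — is used. Everything else is a routine manipulation of cosets modulo $L_{\max}$, relying on the fact that linear parts of affine isometries of $\Lambda$ preserve $L_{\max}$.
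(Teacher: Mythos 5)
Your argument is correct and follows essentially the same route as the paper: existence and uniqueness of $\sigma(i)$ from the disjoint-coset decomposition, injectivity of $\sigma$ via $\bar{f}(t_i-t_j)\in L_{\max}\Leftrightarrow t_i-t_j\in L_{\max}$, and the converse by exhibiting the affine isometry $x\mapsto\varphi(x-t_1)+t_{\sigma(1)}$. The only difference is that you spell out the verification $g(\Lambda)=\Lambda$, which the paper leaves implicit.
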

\begin{proof}
If $f \in \Isom \Lambda$, then for each $i \in \left\lbrace 1, \dots, m 
\right\rbrace$ there exists an index 
$\sigma(i)$  such that $f(t_i) \in t_{\sigma(i)} + L_{\max}$, and $\sigma$ is a 
permutation since
$$f(t_i) \equiv f(t_j) \!\!\mod L_{\max}\Leftrightarrow \bar{f}(t_i-t_j) \in 
L_{\max} \Leftrightarrow t_i-t_j \in L_{\max} \Leftrightarrow i=j.$$ 
This proves~\ref{perm} as well as the congruence $\bar{f}(t_i - t_1)  \equiv t_{\sigma(i)}-t_{\sigma(1)} 
\!\!\mod L_{\max}$ for all $i\in \left\lbrace 1, \dots, m 
\right\rbrace$. Conversely, if $\varphi \in \Aut 
L_{\max}$ satisfies \eqref{orthaut} for some permutation $\sigma$, then the map $f_{\varphi}(x):= \varphi 
(x-t_1)+t_{\sigma(1)}$ is in $\Isom \Lambda$ and 
$\bar{f_{\varphi}}=\varphi$, which establishes \ref{aut}.
\end{proof}

Note that for each $\varphi$ in $\Aut \Lambda$, the associated permutation 
$\sigma$ is unique, as a consequence of the maximality of $L_{\max}$: 
If $\sigma$ and $\gamma$ are two permutations of $\Lambda \slash L_{\max}$ 
such that $\varphi(t_i - t_1)  \equiv  t_{\sigma(i)}-t_{\sigma(1)} \equiv 
t_{\gamma(i)}-t_{\gamma(1)}\mod L_{\max}$ for all $i$, then 
$t_{\sigma(i)}-t_{\gamma(i)} \equiv  t_{\sigma(1)}-t_{\gamma(1)}=: u\mod L_{\max}$ 
for all  $i$, whence $t_{\sigma(i)}\equiv u+t_{\gamma(i)}$, which implies that $u \in 
L_{\max}$, so that $\sigma=\gamma$. 

Also, the elements of $\Aut \Lambda$ stabilize the set $\Lambda - \Lambda= \bigcup_{1 \leq i \leq m}\Lambda_{t_i}$. More precisely, for each $\varphi \in \Aut \Lambda$ one has $\varphi (\Lambda_{t_i})=\Lambda_{t_{\sigma(i)}}$ where $\sigma$ is the permutation of $\Lambda \slash L_{\max}$ canonically associated to $\varphi$. This last property makes this group the right object to consider in the sequel.

\begin{rem} \label{rem:auto-discuss} 
For a given periodic set 
$\Lambda = \bigcup_{i=1}^m\left(t_i + L_{\max}\right)$, 
we can often assume 
without loss of generality that $t_1=0$ (it amounts to translate
$\Lambda$ by a fixed vector). In such a situation, $\Aut \Lambda$
contains, with index at most $m$,  the subgroup $$\Aut_{0} \Lambda =
\left\lbrace \varphi  \in \Aut L_{\max} \mid \varphi(\Lambda)=\Lambda
\right\rbrace.$$ This corresponds to permutations $\sigma$  fixing $1$
in \eqref{orthaut} and could be a natural choice for an alternative
definition of the group of automorphisms of $\Lambda$. Nevertheless,
it would introduce a somewhat unnecessary dissymmetry between the
$t_i$'s, and would lead to disregard some automorphisms which are
natural to consider.
\end{rem}

For example, for a $2$-periodic set
 $$\Lambda=L_{\max} \cup \left(  v+L_{\max}\right) \ , \ 2v\notin L_{\max},$$ 
we have $-\id \in \Aut \Lambda \setminus \Aut_{0} \Lambda$ and $\left[ \Aut \Lambda : \Aut_0\Lambda\right]=2$. 
 
At the other end, if $\Lambda$ is a $3$-periodic set of the form 
$$\Lambda=L_{\max} \cup \left(v+L_{\max}\right)\cup\left( -v+L_{\max}\right) \ , \ 2v\notin L_{\max},$$ 
 then one checks that $\Aut \Lambda = \Aut_0\Lambda$.

\subsection*{Review on theta series and modular forms}

For some estimates needed in Section~\ref{sec:plc} we use
certain theta series and their properties, which we review here. 
To start with, we state a rather general result about the modularity of theta series with spherical coefficients attached to a rational periodic set.

If $L$ is a lattice in $\R^n$ and $\rho$ is any vector in 
$\R^n$, one defines, for $z$ in the upper half-plane $\mathbb H =\left\lbrace z \in 
\C \mid \Im z >0 \right\rbrace$
\begin{equation}\label{theta}
\theta_{\rho+L}(z) =\sum\limits_{x \in \rho + L} e\left(\dfrac{\Vert 
	x \Vert^2 z}{2}\right)
\end{equation}
where $e(z)=e^{2\pi i z}$. When $\rho=0$, this reduces to the standard theta series 
of the lattice $L$. 

As in the lattice case, one can introduce \emph{spherical 
	coefficients} in the previous definition, namely,  if $P$ is a harmonic polynomial, one 
defines 
\begin{equation}\label{thetaspher}
\theta_{\rho+L,P}(z) =\sum\limits_{x \in \rho + L} P(x)e\left(\dfrac{\Vert 
	x \Vert^2 z}{2}\right).
\end{equation}
From this, and following \cite{MR688626}, we define the \emph{average theta series} with spherical coefficients $P$ of a periodic set $\Lambda= \bigcup_{i=1}^m\left(  t_i + L\right)$ as

\begin{align*}
\theta_{\Lambda,P}(\tau)&=\dfrac{1}{m}\sum\limits_{1\leq i,j\leq m} \theta_{t_i-t_j+L,P}(\tau)\\
&=\theta_{L,P}(\tau) + \dfrac{2}{m}\sum\limits_{1\leq i < j\leq m} \theta_{t_i-t_j+L,P}(\tau).
\end{align*}

Both, \eqref{theta} and \eqref{thetaspher}, satisfy transformation formulas under 
$\SL(2,\Z)$, from which one deduces,  
under suitable assumptions on $L$ and $\rho$, that $\theta_{\rho+L,P}(z)$ (resp. $\theta_{\Lambda,P}$) is a \emph{modular form} for some modular group and character (see Proposition \ref{mod} below).
 Let $L$ be an even integral lattice, \ie $x \cdot x$ is even for all $x \in L$. 
The level of $L$ is the smallest integer $N$ such that $\sqrt{N}L^{*}$ is even integral (this implies in particular that $NL^{*} \subseteq L$).
\begin{proposition}\label{mod}
	Let $L$ be an even integral lattice of dimension $n$ and level
        $N$. Then, for any $\rho \in L^{*}$, and any spherical
        harmonic polynomial $P$ of degree $k$, 
        the theta series $\theta_{\rho+L,P}(z)$ is a modular form of weight $k+\dfrac{n}{2}$ for the principal congruence group $$\Gamma (4N)=\left\lbrace \tau =\begin{pmatrix}
	a & b \\ 
	c & d
	\end{pmatrix} \in \SL(2,\Z) \mid \tau \equiv \begin{pmatrix}
	1 & 0\\ 
	0 & 1
	\end{pmatrix}\mod 4N\right\rbrace $$ and the character $$\vartheta\left(\tau \right) = \left(\dfrac{2c}{d}\right)^n
	.$$
	Moreover, if $k>0$, then $\theta_{\rho+L,P}(z)$ is a cusp form.
\end{proposition}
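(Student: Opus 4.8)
The plan is to determine the transformation behavior of $\theta_{\rho+L,P}$ under the full group $\SL(2,\Z)$ and then read off the statement on the subgroup $\Gamma(4N)$. Two formulas do all the work. First, writing $x=\rho+\ell$ with $\ell\in L$ and using that $\rho\cdot\ell\in\Z$ (since $\rho\in L^{*}$) and $\|\ell\|^{2}\in 2\Z$ (since $L$ is even), one gets $\|x\|^{2}\equiv\|\rho\|^{2}\pmod{2}$, hence the translation formula
\[
\theta_{\rho+L,P}(z+1)=e\!\left(\tfrac{\|\rho\|^{2}}{2}\right)\theta_{\rho+L,P}(z),
\]
in which $e\!\left(\tfrac{\|\rho\|^{2}}{2}\right)$ is an $N$-th root of unity because $\|\rho\|^{2}\in\tfrac{2}{N}\Z$. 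Second, applying Poisson summation to $f(x)=P(x)\,e(\|x\|^{2}z/2)$ and using that the Fourier transform of $P(x)e^{-\pi\|x\|^{2}}$ equals $(-i)^{k}P(x)e^{-\pi\|x\|^{2}}$ (for $P$ harmonic of degree $k$), one obtains after analytic continuation the inversion formula
\[
\theta_{\rho+L,P}(-1/z)=\frac{(-i)^{k}}{\sqrt{\det L}}\,\Big(\tfrac{z}{i}\Big)^{k+\frac{n}{2}}\sum_{\rho'\in L^{*}/L}e(\rho\cdot\rho')\,\theta_{\rho'+L,P}(z),
\]
where the coset sum makes sense since $e(\rho\cdot y)$ depends only on $y\bmod L$. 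Consequently the finite-dimensional space $V$ spanned by $\{\theta_{\rho'+L,P}:\rho'\in L^{*}/L\}$ is stable under the weight-$(k+\frac{n}{2})$ slash action of the generators $S$ and $T$ of $\SL(2,\Z)$, hence of all of $\SL(2,\Z)$; for odd $n$ this is meant in the metaplectic sense, with the principal branch of $(cz+d)^{1/2}$ fixed throughout, which is exactly what forces the factor $4$ in the level below.

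Granting this, the three defining properties of a modular form follow. Holomorphy on $\mathbb{H}$ and holomorphy at the cusps are immediate: each $\theta_{\rho'+L,P}$ is an absolutely (and locally uniformly) convergent series with polynomially growing coefficients, so it admits a genuine $q$-expansion with non-negative exponents, and the same then holds for $\theta_{\rho+L,P}|_{k+n/2}\gamma\in V$ for every $\gamma\in\SL(2,\Z)$. For automorphy on $\Gamma(4N)$ one must pin down the scalar by which $\gamma=\begin{pmatrix}a&b\\c&d\end{pmatrix}\in\Gamma(4N)$ acts: I would either reduce $\gamma$ to a word in $S$ and $T$ and track the multipliers produced by the two formulas above, or --- more efficiently --- quote the classical Hecke--Schoeneberg transformation formula for the coset theta series $\theta_{\rho+L,P}$ (its half-integral-weight form going back to Shimura). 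The conditions $c\equiv0$ and $d\equiv1\pmod{4N}$ are precisely what is needed to annihilate the residual $\rho$-dependent phases (governed by $\|\rho\|^{2}\in\tfrac{2}{N}\Z$ and the pairings $\rho\cdot\rho'\bmod 1$, using $NL^{*}\subseteq L$) and to collapse the quadratic Gauss sums coming from the inversion formula into the single factor $\vartheta(\gamma)=\big(\tfrac{2c}{d}\big)^{n}$; here $N$ reflects the lattice level and $4$ the usual theta-multiplier contribution.

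The cusp-form claim for $k>0$ is then short. Since $V$ is $\SL(2,\Z)$-stable, for every $\gamma\in\SL(2,\Z)$ the function $\theta_{\rho+L,P}|_{k+n/2}\gamma$ is a linear combination of the $\theta_{\rho'+L,P}$, and the $q$-expansion of $\theta_{\rho'+L,P}(z)=\sum_{x\in\rho'+L}P(x)\,e(\|x\|^{2}z/2)$ has constant term $P(0)$ if $0\in\rho'+L$ and $0$ otherwise. As $P$ is homogeneous of degree $k>0$ we have $P(0)=0$, so every such slash vanishes at $i\infty$; equivalently $\theta_{\rho+L,P}$ vanishes at every cusp of $\Gamma(4N)$, and it is a cusp form.

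The main obstacle is the bookkeeping of the multiplier system: upgrading ``$\theta_{\rho+L,P}$ transforms with some root-of-unity multiplier under each generator'' to the closed form $\big(\tfrac{2c}{d}\big)^{n}$ on $\Gamma(4N)$ --- in particular verifying that the $\rho$-dependence and the ``genuine'' part of the multiplier both disappear already over $\Gamma(4N)$ --- is the delicate point. The cleanest route is to reduce once and for all to the classical Hecke--Schoeneberg (resp.\ Shimura) transformation formula for theta series of lattice cosets, rather than re-deriving the multiplier by composing the $S$- and $T$-formulas by hand.
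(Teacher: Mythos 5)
Your argument is correct in outline, but it takes a visibly different route from the paper, which disposes of the whole statement in one stroke: it sets $L=g\Z^n$, $A=g^tg$, $h=Ng^{-1}(\rho)$, observes that $\rho\in L^{*}$ translates into the congruence $Ah\equiv 0\bmod N$ defining Iwaniec's set $\mathcal H$, identifies $\theta_{\rho+L,P}$ with the congruence theta series $\Theta(z;h)$ of \cite[Corollary 10.7]{MR1474964}, and reads off weight, level, character and the cusp-form property directly from that corollary. You instead rebuild the standard skeleton: the translation formula (your computation that $\|\rho+\ell\|^2\equiv\|\rho\|^2\bmod 2$ and that $e(\|\rho\|^2/2)$ is an $N$-th root of unity is right, using $N\|\rho\|^2\in 2\Z$ from the definition of the level), the inversion formula via Poisson summation and the Hecke identity $\widehat{Pe^{-\pi\|x\|^2}}=(-i)^kPe^{-\pi\|x\|^2}$ (your constant and the coset sum over $L^{*}/L$ check out), stability of the span of the coset theta series under the slash action, and the cusp-vanishing from $P(0)=0$ for $k>0$. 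What your route buys is transparency: holomorphy at the cusps and the cuspidality for $k>0$ become one-line consequences of the $q$-expansions, whereas the paper leaves them buried inside the citation. What it costs is exactly the point you flag yourself: pinning down the multiplier as $\left(\frac{2c}{d}\right)^n$ on $\Gamma(4N)$ (and, for odd $n$, handling the metaplectic branch consistently) is the genuinely delicate step, and your proposal ultimately defers it to the Hecke--Schoeneberg/Shimura formula --- which is the same classical input the paper invokes via Iwaniec. So in substance both proofs rest on the same citation; yours wraps it in a self-contained derivation of everything except the multiplier, the paper's is a pure notational dictionary. No gap, provided the deferred multiplier identification is indeed taken from the literature rather than left as an exercise.
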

\begin{proof}
	This is essentially \cite[Corollary 10.7]{MR1474964}, up to
        reformulation: setting $L=g \Z^n$ for some $g \in \GL(n,\R)$,
        $A=g^tg$, 
and $h=Ng^{-1}(\rho)$, the condition $\rho \in L^{*}$ is equivalent to
$Ah \equiv 0 \mod N$ (the condition defining the set $\mathcal H$ in
\cite[Corollary 10.7]{MR1474964}) and $\theta_{\rho+L,P}(z)$ coincides
with the congruence theta series $\Theta(z;h)$ in the above reference,
whence the conclusion follows.
\end{proof}

\section{Energy of periodic sets}\label{sec:two} 

We recall in this section some basic facts about the energy of a periodic set and its local study, which were established in \cite{MR2889159}.

Following Cohn and Kumar \cite{MR2257398} we define the energy of a periodic set with respect to a non negative potential function as follows:
\begin{defn}\label{pnrj} 
Let $\Lambda$ be a periodic set with maximal period lattice $L_{\max}$, and $f$ a non-negative potential function. We set 
\begin{equation}\label{eqn:nrj} 
E(f,\Lambda)=\frac{1}{m(\Lambda)}\sum_{x \in \mathcal{R}}\sum_{\substack{u\in \Lambda_x\\ u \neq 0}}f(\norm{u}^2)
\end{equation}
where $\mathcal R$ is a set of representatives of $\Lambda$ modulo $L_{\max}$.
\end{defn}

This sum may diverge, in which case  the energy is \emph{infinite}. 
Note that if $\Lambda$ is given by an $m$-periodic representation $\Lambda = \bigcup_{i=1}^m x_i + L$, non necessarily minimal, then one has 
 $$E(f,\Lambda)=\frac{1}{m}\sum_{i=1}^m\sum_{\substack{u \in \Lambda _{x_i}\\ u \neq 0}} f(\norm{u}^2)=\frac{1}{m}\sum_{1 \leq i, j \leq m}\sum_{\substack{w \in L \\ w+x_j-x_i \neq 0}} f(\norm{w+x_j-x_i}^2)$$
in accordance with the definition used in \cite{MR2257398}.
 
This "non intrinsic" formulation
$\frac{1}{m}\sum_{i=1}^m\sum_{\substack{u \in \Lambda _{t_i}\\ u \neq
    0}} f(\norm{u}^2)$ is often better suited for explicit computations because it allows to use representations of periodic sets that are not assumed to be minimal.

We want to expand the $f$-energy in a neighbourhood of a given $m$-periodic set $$\Lambda_0=\bigcup_{i=1}^m t^0_i+L_0.$$
Note that the question of periodic sets with minimal $f$-energy
(with $f$ being monotone decreasing) only makes sense if we restrict to 
periodic sets with a fixed point density. 
Otherwise, the energy can be
made arbitrary small by scaling. 
So we restrict to $m$-periodic sets $\Lambda$ with fixed point density, \ie of the form
$$\Lambda =\displaystyle\bigcup_{i=1}^m (t^0_i+t_i)+gL_0$$
with $g \in \SL(n,\R)$.

As in \cite[\S 3]{MR2889159}, we set
$g^{t}g=A_0^{t}A_0\exp\left(A^{-1}_0HA_0\right)$ 
where $L_0=A_0\Z^n$ and $H$ is a trace zero symmetric matrix. Then the evaluation of the energy $E(f,\Lambda)$ as $\Lambda$ varies in a neighbourhood of the initial periodic set $\Lambda_0$ reduces to the study of the quantity

\begin{equation}\label{nrg3} 
E_f(H,\mathbf{t}) \coloneqq\frac{1}{m}  \; \sum_{1\leq i,j\leq m} \; \sum_{0\neq w \in t^0_i-t^0_j+L_0}  f(\exp(H)\br{w+t_i-t_j})
\end{equation}
for small enough $\mathbf{t}\in \R^{mn}$ and $H \in\mathcal T:=\set{Q \in \Sn \mid \Tr(Q)=0}$, where $\Sn$ stands for the space of $n\times n$ real symmetric matrices (see \cite[\S 3]{MR2889159} for details).

Using the Taylor expansion of the matrix exponential we write
\begin{equation*}
\exp(H)[w+t_i-t_{j}]={\|w\|^2}+\mathcal{L}\pr{H,\mathbf{t}}+\mathcal{S}\pr{H,\mathbf{t}}+ o (\|\pr{H,\mathbf{t}}\|^2)
\end{equation*}
where
\begin{equation*}
\mathcal{L}\pr{H,\mathbf{t}}=H[w]+ 2w^{t}(t_i-t_{j})
\end{equation*}
and
\begin{equation*}
\mathcal{S}\pr{H,\mathbf{t}}=\|t_i-t_{j}\|^2+2w^{t}H(t_i-t_{j})+\frac{1}{2}H^2\br{w}.
\end{equation*}

In particular, if $f(r)=e^{-cr}$ we get 
\begin{equation*}
e^{-c\exp(H)[w+t_i-t_{j}]}=e^{-c\|w\|^2}\left(1-c\left(\mathcal{L} +\mathcal{S} \right)+\frac{c^2}{2}\mathcal{L}^2\right)+ o (\|\pr{H,\mathbf{t}}\|^2) 
\end{equation*}
and hence the following expressions for the gradient 
\begin{equation*}
 \grad = -\frac{c}{m} 
\sum_{1\leq i,j\leq m}\sum_{0\neq w \in t^0_i-t^0_j+L_0}
\left(   
H[w]+ 2w^{t}(t_i-t_j) 
\right) e^{-c \|w\|^2} 
\end{equation*}
and the Hessian
\begin{eqnarray*}
\hess&=&\frac{c}{m} 
\sum_{1\leq i,j\leq m}\sum_{0\neq w \in t^0_i-t^0_j+L_0}
\left( \left\lbrace \frac{c}{2}H[w]^2-\frac{1}{2}H^2\br{w}\right\rbrace
\right.  
\\&+&
\left.  
\left\lbrace 2c \left( w^{t}(t_i-t_j)\right)^2  - \|t_i-t_j\|^2-2w^{t}H(t_i-t_j)+2cw^{t}(t_i-t_j)H[w]\right\rbrace 
\right) e^{-c \|w\|^2}  
.
\end{eqnarray*}

\section{Critical Points}   \label{sec:critical}
A periodic set is said to be $f$-critical if it is a critical point for the energy $E_f$. We will be especially interested in $f_c$-critical periodic sets, where~$f_c(x)=e^{-cx}$ with~$c>0$, since these functions generate the space of completely monotonic functions (see \cite[Theorem 12b, p. 161]{MR0005923}).

We want to give a necessary and sufficient criterion for 
a periodic set $\Lambda$ in $\R^n$ to be $f_c$-critical for all $c>0$. Using the formulas of the previous section this amounts to show that the gradient vanishes for all choices of~$c>0$. 

Collecting the terms in the sum above with the same value $e^{-c\|w\|^2}$, we obtain the following:

\begin{lemma}\label{lem:sums}
A periodic set~$\Lambda$ in $\R^n$ is $f_c$-critical for all~$c>0$ if and only if the terms
$$
\sum_{1\leq i,j\leq m} 
\; \sum_{ w \in t^0_i-t^0_j+L_0 , \; \|w\| = r}  \;
H[w]+ 2w^{t}(t_i-t_j) 
$$
vanish for any representation $\Lambda=\bigcup_{i=1}^m t^0_i+L_0$
and any choice of $r>0$ and $(H,\mathbf{t})$. 
\end{lemma}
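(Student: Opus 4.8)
The plan is to start from the gradient formula derived in Section~\ref{sec:two},
\[
\grad = -\frac{c}{m}\sum_{1\leq i,j\leq m}\;\sum_{0\neq w\in t^0_i-t^0_j+L_0}\bigl(H[w]+2w^t(t_i-t_j)\bigr)e^{-c\|w\|^2},
\]
and observe that $\Lambda$ is $f_c$-critical exactly when this vanishes for every admissible variation $(H,\mathbf t)$. Since $\grad$ is linear in $(H,\mathbf t)$, it suffices to treat it as a single real-valued quantity and show it is zero for all $(H,\mathbf t)$. The key idea, as announced, is to group the doubly indexed sum according to the common value of $\|w\|$: writing the set of occurring norms as a discrete sequence $0< r_1 < r_2 < \cdots$, we have
\[
\grad = -\frac{c}{m}\sum_{k\geq 1} e^{-c r_k^2}\;\underbrace{\sum_{1\leq i,j\leq m}\;\sum_{\substack{w\in t^0_i-t^0_j+L_0\\ \|w\|=r_k}}\bigl(H[w]+2w^t(t_i-t_j)\bigr)}_{=:\;c_k(H,\mathbf t)}.
\]

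Next I would argue that, for fixed $(H,\mathbf t)$, the vanishing of $\grad$ for \emph{all} $c>0$ forces $c_k(H,\mathbf t)=0$ for every $k$. This is the standard linear-independence argument for exponentials: the functions $c\mapsto e^{-cr_k^2}$, $k\geq 1$, with distinct exponents $r_k^2$, are linearly independent over $\R$; moreover, since the energy of $\Lambda$ is assumed finite (otherwise the statement is vacuous for the relevant sets), the series $\sum_k c_k(H,\mathbf t)e^{-cr_k^2}$ converges absolutely and locally uniformly in $c$, so one may for instance compare asymptotics as $c\to\infty$ (the term with the smallest $r_k$ dominates) and peel off the coefficients one at a time, or invoke uniqueness of Dirichlet-series coefficients. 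Either way one concludes $c_k(H,\mathbf t)=0$ for all $k$ and all $(H,\mathbf t)$, which is precisely the displayed condition in the statement with $r=r_k$; conversely, if all those sums vanish then $\grad=0$ term by term, so $\Lambda$ is $f_c$-critical for every $c>0$. Finally I would note that the condition is independent of the chosen representation $\bigcup_{i=1}^m t^0_i+L_0$, since $\grad$ itself is (up to the harmless factor $m$ versus $m(\Lambda)$, which does not affect vanishing) and since an equivalent intrinsic description via the weight function $\nu$ could be written down.

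The only genuinely delicate point is the convergence/linear-independence step: one must be careful that the reordering of the (infinite, doubly indexed) sum into blocks indexed by $\|w\|=r_k$ is legitimate and that swapping the $c$-limit with the summation is justified. The clean way around this is to restrict attention to potentials and sets for which $E(f_c,\Lambda)<\infty$ for all $c>0$ — which is automatic for $f_c(x)=e^{-cx}$ and any periodic set, by comparison with the lattice theta series — so that all the series in sight are dominated by convergent ones and Abel/Tauberian or bare asymptotic arguments apply without fuss. Everything else is bookkeeping: expanding the matrix exponential (already done in Section~\ref{sec:two}), collecting equal-norm terms, and reading off the stated criterion.
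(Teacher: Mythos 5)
Your proposal is correct and follows essentially the same route as the paper: regroup the gradient by shells $\|w\|=r$, then use the dominance of the term with the smallest non-vanishing $r$ as $c\to\infty$ (equivalently, linear independence of the exponentials $e^{-cr^2}$) to conclude that every shell sum must vanish. The extra care you take about absolute convergence and interchanging sums is a welcome refinement but does not change the argument.
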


\begin{proof} According to the previous section, the gradient of $E_{f_c}$can be written as
\begin{equation}
\grad = -\frac{c}{m} 
\sum_{r>0} \left[\sum_{1\leq i,j\leq m}\;\sum_{ w \in t^0_i-t^0_j+L_0, \|w\|=r}
\left(  
H[w]+ 2w^{t}(t_i-t_j) 
\right)\right] e^{-c r^2}
\end{equation}
Suppose there is a representation of~$\Lambda$ 
and a minimal $r>0$ for which the sum between brackets does 
not vanish for some choice of $(H,\mathbf{t})$.
Then for sufficiently large~$c$ the gradient is 
essentially given by the corresponding term
(in front of $e^{-c r^2}$). So the gradient does not vanish as well.

If on the other hand the gradient vanishes for all $c$,
we find that the corresponding sums of the proposition 
all have to vanish.
\end{proof}

We want to state a necessary and sufficient condition for the vanishing
of all the sums of the previous propositions in terms of \emph{weighted spherical designs}.
For a periodic set $\Lambda$, $x\in \Lambda$ and $r >0$ we define
$$
\Lambda_x(r) \; = \;  \{ y-x  \mid \| y-x \| = r, \; y \in \Lambda \}
$$
and we set $\Lambda(r) = \bigcup_{x\in \Lambda} \Lambda_x(r)$.

A \emph{weighted spherical $t$-design} is a pair $(X,\nu)$ of a
finite set $X$ contained in a sphere of radius~$r$
and a weight function $\nu$ on $X$ such that 
\begin{equation}
\dfrac{1}{\left| S \right|}\int_S f(x) dx = \dfrac{1}{\left| X \right|}\sum_{x \in X} \nu(x) f(x)
\end{equation}
for all polynomials $f(x)=f(x_1, \dots,x_n)$ of degree at most $t$. This is a special case of a \emph{cubature formula} on the sphere, studied \eg by Goethals and Seidel in \cite{MR661779}, and reduces to the classical notion of spherical $t$-design when the weight function is equal to $1$. 

Note that, for $t=1$, this simply means that the weighted sum $\sum_{x \in X} \nu(x) x$ is $0$. When all weights are $1$, this reduces to the condition 
$$\sum_{x \in X} x = 0$$ 
which we refer to in the sequel as $X$ being a \emph{balanced} set. One may think of forces acting on the origin that balance each other.

Finally, we mention the following useful characterization of the $2$-design property, which will be used throughout the rest of the paper:

\begin{lemma}[\cite{MR964837}, Theorem 4.3]\label{NS}\quad
A weighted set $(X,\nu)$ on a sphere of radius $r$ in $\R^n$ is a weighted spherical $2$-design if and only if 
$$\sum_{x \in X} \nu(x) x =0 \text{ and } \sum_{x \in X} \nu(x) x x^t=c\id_n$$ for some constant $c$.
\end{lemma}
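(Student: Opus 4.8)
The plan is to verify the cubature identity
$$\frac{1}{|S|}\int_S f(x)\,dx \;=\; \frac{1}{|X|}\sum_{x\in X}\nu(x)f(x)$$
directly on a spanning set of the space $\mathrm{Pol}_{\le 2}(\R^n)$ of polynomials of degree at most~$2$, exploiting that both sides are linear in~$f$. A convenient spanning set consists of the constant~$1$, the linear forms $\ell_u(x)=u\cdot x$ with $u$ running over $\R^n$, and the homogeneous quadratic forms $q_A(x)=x^{t}Ax$ with $A$ running over the space $\Sn$ of real symmetric matrices. Throughout one uses that $X\subseteq S$ forces $\norm{x}^2=r^2$ for $x\in X$.

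First I would treat the low degrees. For $f=1$ the identity reads $\sum_{x\in X}\nu(x)=|X|$; I take this normalization as part of what it means for $(X,\nu)$ to be a weighted set (it is in any case itself an instance of the cubature identity, hence necessary for the $2$-design property). For $f=\ell_u$, central symmetry of the sphere gives $\int_S \ell_u(x)\,dx=0$, so the cubature identity for all linear $f$ is equivalent to $u\cdot\bigl(\sum_{x\in X}\nu(x)\,x\bigr)=0$ for every $u\in\R^n$, that is, to $\sum_{x\in X}\nu(x)\,x=0$, the first asserted condition.

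The heart of the matter is the quadratic case. For $A\in\Sn$ I would compute the spherical moment using invariance of $S$ under coordinate permutations and sign changes: $\int_S x_ix_j\,dx=0$ for $i\neq j$, while $\int_S x_i^2\,dx$ is independent of~$i$ and, on summing and using $\sum_i x_i^2=r^2$, equals $\tfrac{r^2}{n}|S|$; hence $\tfrac{1}{|S|}\int_S x^{t}Ax\,dx=\tfrac{r^2}{n}\Tr(A)$. On the discrete side, writing $M\coloneqq\sum_{x\in X}\nu(x)\,xx^{t}\in\Sn$, one has $\sum_{x\in X}\nu(x)\,x^{t}Ax=\Tr(AM)$. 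Thus the cubature identity holds for all $q_A$ if and only if $\Tr\bigl(A(\tfrac{r^2}{n}\id_n-\tfrac{1}{|X|}M)\bigr)=0$ for every $A\in\Sn$, and since the pairing $(A,B)\mapsto\Tr(AB)$ is nondegenerate (indeed positive definite) on $\Sn$, this is equivalent to $M=\tfrac{r^2|X|}{n}\id_n$, i.e. to $M=c\,\id_n$ for a constant~$c$. Conversely, if $M=c\,\id_n$, taking traces and using $\norm{x}=r$ gives $cn=r^2\sum_{x\in X}\nu(x)=r^2|X|$, so $c=\tfrac{r^2|X|}{n}$ and the quadratic identities follow. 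Collecting the three degrees yields the claimed equivalence.

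There is no genuine difficulty here; the one computation that requires care is the evaluation of the second spherical moment $\int_S x_ix_j\,dx$, which is precisely where the rotational symmetry of~$S$ enters, together with the bookkeeping observation that the constant~$c$ in the statement is not arbitrary but forced to equal $r^2|X|/n$. An alternative, more structural route would use the orthogonal decomposition of $\mathrm{Pol}_{\le 2}(\R^n)$ restricted to~$S$ into spaces of harmonic polynomials of degrees $0,1,2$ (the factor $\norm{x}^2$ being constant on~$S$) together with the orthogonality of harmonics of different degrees against the uniform measure; but the direct moment computation above is shorter and entirely self-contained.
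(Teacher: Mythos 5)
Your argument is correct. Note, however, that the paper offers no proof of this lemma at all: it is quoted verbatim from Neumaier--Seidel \cite{MR964837}, Theorem~4.3, so there is nothing internal to compare your proof against. Your direct verification on the spanning set $\{1\}\cup\{\ell_u\}\cup\{q_A\}$ of polynomials of degree at most~$2$ is the standard and natural route: the degree-$1$ case reduces to $\sum_x\nu(x)x=0$ by the vanishing of odd spherical moments, and the degree-$2$ case reduces, via the nondegeneracy of $(A,B)\mapsto\Tr(AB)$ on $\Sn$ and the second moment $\tfrac1{|S|}\int_S x^tAx\,dx=\tfrac{r^2}{n}\Tr(A)$, to $\sum_x\nu(x)xx^t$ being the specific multiple $\tfrac{r^2|X|}{n}\id_n$ of the identity. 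The one genuinely delicate point is the one you flag: the two displayed conditions alone do not pin down the constant $c$, and the degree-$0$ cubature identity forces the normalization $\sum_x\nu(x)=|X|$; only with that normalization does the trace computation $cn=r^2\sum_x\nu(x)$ yield $c=\tfrac{r^2|X|}{n}$ and hence the quadratic identities. Building this normalization into the notion of ``weighted set'' (as Neumaier and Seidel do) is the right resolution, and it is consistent with how the paper uses the lemma, e.g.\ in deriving the value of $c_r$ after equation~\eqref{2des}. No gaps.
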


\begin{theorem} \label{th:periodic-critical}
A periodic set~$\Lambda$ in $\R^n$ is $f_c$-critical for any~$c>0$ if and only if
\begin{enumerate}
\item
All non-empty shells $\Lambda_x(r)$ for $x\in \Lambda$ and $r>0$ are balanced.
\item
All non-empty shells $\Lambda(r)$ for $r>0$ are weighted spherical $2$-designs with respect to the weight $\nu(w)=\frac{1}{m(\Lambda)}\left| \left\lbrace x \in \Lambda \!\!\mod L_{\max} \mid w \in \Lambda_x\right\rbrace\right|$.
\end{enumerate}
\end{theorem}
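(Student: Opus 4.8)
The plan is to start from Lemma~\ref{lem:sums}, which reduces the claim to showing that for every representation $\Lambda=\bigcup_{i=1}^m t^0_i+L_0$, every $r>0$, and every admissible pair $(H,\mathbf t)$ with $H\in\mathcal T$, the finite sum
\begin{equation*}
\Sigma_r(H,\mathbf t) \;=\; \sum_{1\leq i,j\leq m}\;\sum_{\substack{w\in t^0_i-t^0_j+L_0\\ \|w\|=r}} \bigl(H[w]+2w^t(t_i-t_j)\bigr)
\end{equation*}
vanishes identically in $(H,\mathbf t)$. Since $\Sigma_r$ is linear in $(H,\mathbf t)$, I would split it into the $H$-part and the $\mathbf t$-part and treat them separately, then argue that it suffices to do this for the \emph{minimal} representation, after which conditions (1) and (2) become exactly the vanishing of the two parts. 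The passage to the minimal representation is where the weight $\nu$ enters: collapsing $L_0$ to $L_{\max}$ merges cosets, and a standard counting argument (the same one behind formula~\eqref{poids}) shows that summing over all $m^2$ ordered pairs $(i,j)$ in an arbitrary representation is, after dividing by $m$, the same as summing over $x\in\mathcal R$ with each difference vector $w$ weighted by $m(\Lambda)\nu(w)$; so vanishing for the minimal representation implies vanishing for all.

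For the $\mathbf t$-part, fix a minimal representation with $t^0_1=0$ and consider the coefficient of a single coordinate block $t_k$. Because $\mathbf t$ ranges over all of $\R^{mn}$ (with, say, the center-of-mass normalization), extracting the coefficient of $t_k$ shows the $\mathbf t$-part vanishes for all $\mathbf t$ if and only if for each fixed $x=t^0_k\in\mathcal R$ and each $r>0$ one has $\sum_{w\in\Lambda_x,\ \|w\|=r} w = 0$, i.e.\ every shell $\Lambda_x(r)$ is balanced — this is condition (1). (One has to be slightly careful that the antisymmetric contributions from the pairs $(i,j)$ and $(j,i)$ combine correctly, using $w\in\Lambda_{t^0_i-t^0_j}\Leftrightarrow -w\in\Lambda_{t^0_j-t^0_i}$; this is where I'd be most careful, but it is routine.) Given condition (1), the cross term $2w^t(t_i-t_j)$ summed over a fixed shell already vanishes, so the $\mathbf t$-part drops out entirely and we are left with the $H$-part.

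For the $H$-part, note $H[w]=\Tr(H\, ww^t)$, so $\sum_{\|w\|=r} H[w] = \Tr\bigl(H\,\sum_{\|w\|=r,\ w\in\Lambda(r)} m(\Lambda)\nu(w)\, ww^t\bigr)$. This vanishes for \emph{all} trace-zero symmetric $H$ precisely when $\sum_{w\in\Lambda(r)}\nu(w)\, ww^t$ is a scalar multiple of $\id_n$. Combined with the fact that $\sum_{w\in\Lambda(r)}\nu(w)\, w=0$ — which follows from condition (1) since $\Lambda(r)=\bigcup_x\Lambda_x(r)$ is a union of balanced shells — Lemma~\ref{NS} says this is exactly the statement that $(\Lambda(r),\nu)$ is a weighted spherical $2$-design, i.e.\ condition (2). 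Conversely, if (1) and (2) hold, reversing these computations shows $\Sigma_r(H,\mathbf t)=0$ for every representation (again via the weight-counting reduction), every $r$, and every $(H,\mathbf t)$, so Lemma~\ref{lem:sums} gives $f_c$-criticality for all $c>0$. The main obstacle is bookkeeping rather than conceptual: correctly matching the sum over ordered pairs in an arbitrary representation with the weighted sum over $\Lambda(r)$, and keeping the symmetric/antisymmetric parts straight so that conditions (1) and (2) come out cleanly decoupled.
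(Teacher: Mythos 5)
Your proposal is correct and follows essentially the same route as the paper: split the sums of Lemma~\ref{lem:sums} into the $\mathbf{t}$-part (rearranged as coefficients of each $t_k$, giving the balanced-shell condition) and the $H$-part (rewritten via $H[w]=\Tr(H\,ww^t)$ and the weight $\nu$, giving the multiple-of-identity condition), then invoke Lemma~\ref{NS} to combine them into the weighted $2$-design property. The only cosmetic difference is that you route the bookkeeping through the minimal representation, whereas the paper converts the double sum over ordered pairs directly into the $\nu$-weighted sum over $\Lambda(r)$ for an arbitrary representation; both amount to the same counting.
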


Note that the statement of the theorem, in contrast to the one of Lemma~\ref{lem:sums},
is independent of the possible representations of~$\Lambda$.

\begin{proof}
First observe that the sums of Lemma~\ref{lem:sums}
split
for any representation $\Lambda=\bigcup_{i=1}^m t^0_i+L_0$
into two parts: one depending on $H$ only and one depending on $\mathbf{t}$ only.

The part depending on $\mathbf{t}$ is (up to a factor of $2$) equal to
$$
\sum_{1\leq i,j\leq m} 
\; \sum_{ w \in t^0_i-t^0_j+L_0, \; \|w\| = r}  \;
w^t (t_i - t_j)
.
$$
We can rearrange the sum, collecting terms that occur with a fixed $t_k$,
either for $k=i$ as $u=w \in (t^0_i +L_0) - (t^0_j+L_0)$
or for $k=j$ as $u=-w \in (t^0_j +L_0) - (t^0_i+L_0)$ and get

$$
-2\sum_{1\leq k\leq m}  
\left(
\; \sum_{ u \in \Lambda_{t^0_k}(r)}  \;
u^t
\right)t_k.
$$
So this sum vanishes for all choices of $\mathbf{t}$ if and only if 
the coefficients of each $t_k$ vanish.
This is precisely the case if and only if $\Lambda_x(r)$ is balanced
for every $x\in \Lambda$. This implies that $\Lambda(r)$ itself is a
weighted balanced set (weighted spherical $1$-design) since
\begin{equation*}
\sum_{x \in \mathcal R} \sum_{u\in \Lambda_x(r)}u=\sum_{u\in
  \Lambda(r)}\nu(u) u,
\end{equation*}
with $\mathcal{R}$ being a set of representatives of $\Lambda \!\!\mod L_0$.

The part depending on $H$ can be rewritten as 
\begin{align*}
\sum_{1\leq i,j\leq m} 
\; \sum_{ w \in t^0_i-t^0_j+L_0 , \; \|w\| = r}  \;
H[w]
&\;= \;
\left\langle
H , 
\sum_{1\leq i,j\leq m} 
\; \sum_{ w \in t^0_i-t^0_j+L_0 , \; \|w\| = r}  \;
w w^t
\right\rangle\\ 
&\;= \;\left\langle
H , 
\sum_{1\leq j \leq m}\;\sum_{w \in \Lambda_{t^0_j}(r)} 
\; 
w w^t
\right\rangle\\
&\;= \;m\left\langle
H , 
\sum_{w \in \Lambda(r)} 
  \; \nu(w)
w w^t
\right\rangle
.
\end{align*}
It vanishes for all choices of trace zero symmetric matrice $H$ if and only if the sum of rank-$1$ forms (matrices) $\nu(w)w w^t$ is a (positive) multiple of the identity, namely
\begin{equation}\label{2des} 
\sum_{w \in \Lambda(r)} \nu(w) w w^t = c_r \id_n
\end{equation}
 with 
$$c_r=\dfrac{r^2\sum_{w \in \Lambda(r)} \nu(w)}{n},$$ 
where the value of the constant $c_r$ is obtained by taking the trace of \eqref{2des}.
Combined with the first part of the theorem which insures that $\Lambda(r)$ is already a weighted spherical $1$-design, this last condition is equivalent to $\Lambda(r)$ being a weighted spherical $2$-design, due to Lemma \ref{NS}.

\end{proof}

\section{Expressing energy of $2$-periodic sets}  \label{sec:2periodic}

In order to deal with the energy of $\mathsf{D}_n^+$ and more general
for other $2$-periodic sets, a reordering of contributing terms 
will be very helpful.

Let $\Lambda$ be a periodic set. Without loss of generality, we 
can assume that $\Lambda$ contains $0$ (it amounts to translate $\Lambda$ by a 
well-chosen vector). Note that this is equivalent to the property that $\Lambda$ 
contains its maximal period lattice $L_{\max}$. If we assume moreover that  
$m(\Lambda)=2$, then we have
$\Lambda=  
L_{\max} 
\cup \left(v +  L_{\max}\right)$ for any $v \in \Lambda \setminus  L_{\max}$ and 
 
$$\Lambda_x=\begin{cases}
\Lambda \text{ if } x \in L_{\max}\\
-\Lambda \text{ otherwise.}
\end{cases}$$
In particular, $\Lambda -\Lambda =\Lambda \cup \left( -\Lambda\right)=L_{\max} 
\cup \left(v +  L_{\max}\right)\cup \left(-v +  L_{\max}\right)$. The next lemma clarifies the consequences of these properties on a non-minimal representation of $\Lambda$.
\begin{lemma}\label{lem:perm} 
	Let $\Lambda =\displaystyle\bigcup_{i=1}^m t_i+L \subset \R ^n$ be a periodic set 
	\textbf{containing} $\mathbf{0}$. Suppose $m(\Lambda)=2$. Then there is a partition of $I=\left\lbrace 1, \ldots , m \right\rbrace$ into two equipotent subsets $J$ and $J'$ and a map $\sigma : I \times I \rightarrow I$ such that 
			\[
			\displaystyle
			\forall i \in \set{1,\dots,m} \quad t_i-t_k \equiv
			\left\{
			\begin{array}{ll}
			\phantom{-}t_{\sigma(i,k)} \!\!\mod L \; \mbox{ if} \; k\in J\\
			-t_{\sigma(i,k)} \!\!\mod L \; \mbox{ if} \; k\in J'
			\end{array}
			\right.
			.
			\]
			Moreover, for any fixed $i$ or $k$ in $J$
                        (resp. in $J'$), the maps $\sigma(i,\cdot)$
                        and $\sigma(\cdot,k)$ bijectively map $J$ onto
                        $J$ and $J'$ onto $J'$ (resp $J$ onto $J'$ and $J'$ onto $J$).
 
\end{lemma}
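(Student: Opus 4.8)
The plan is to exploit the rigid structure of a $2$-periodic set through $\mathbf 0$ recorded just before the statement, and then to carry out careful bookkeeping on the possibly non-minimal representation $\Lambda=\bigcup_{i=1}^m t_i+L$. Since $\mathbf 0\in\Lambda$ we have $L_{\max}\subseteq\Lambda$, and $m(\Lambda)=2$ forces $\Lambda=L_{\max}\cup(v+L_{\max})$ with $2v\notin L_{\max}$; recall moreover from the discussion above that $\Lambda_x=\Lambda$ for $x\in L_{\max}$ and $\Lambda_x=-\Lambda$ otherwise, and that $\Lambda\neq-\Lambda$. As $L$ is a period lattice it is contained in $L_{\max}$. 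Now I would set $J=\set{i\in I\mid t_i\in L_{\max}}$ and $J'=I\setminus J$. The cosets $t_i+L$ with $i\in J$ are exactly those among the $t_\ell+L$ contained in $L_{\max}$, and they partition $L_{\max}$, so $\abs J=[L_{\max}:L]$; applying the same argument to the coset $v+L_{\max}$ gives $\abs{J'}=[L_{\max}:L]$. Hence $J$ and $J'$ are equipotent, each of size $m/2$.

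Next I would construct $\sigma$. Fix $i,k\in I$. If $k\in J$, then $-t_k\in L_{\max}$, so $t_i-t_k\in\Lambda_{t_k}=\Lambda$; since the cosets $t_\ell+L$ are pairwise disjoint, there is a unique index, which I denote $\sigma(i,k)$, with $t_i-t_k\equiv t_{\sigma(i,k)}\pmod L$. If $k\in J'$, then $\Lambda_{t_k}=-\Lambda$, so $t_k-t_i=-(t_i-t_k)\in\Lambda$, and there is a unique $\sigma(i,k)$ with $t_k-t_i\equiv t_{\sigma(i,k)}\pmod L$, that is, $t_i-t_k\equiv -t_{\sigma(i,k)}\pmod L$. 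This produces the two displayed congruences, with the sign depending only on $k$; this is precisely where the structure of $\Lambda$ is used, since whether $t_i-t_k$ falls in $\Lambda$ or in $-\Lambda$ is controlled solely by the class of $t_k$ modulo $L_{\max}$.

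For the ``moreover'' part I would first locate the image. Going through the four cases, one checks that $t_{\sigma(i,k)}\in L_{\max}$ precisely when $t_i$ and $t_k$ lie in the same coset of $L_{\max}$; hence $\sigma(i,k)\in J$ if and only if $i$ and $k$ lie in the same part of the partition, and $\sigma(i,k)\in J'$ otherwise. Injectivity is then immediate: for fixed $k$, the map $i\mapsto\sigma(i,k)$ is injective because $t_i$ is recovered modulo $L$ from $t_{\sigma(i,k)}$ and $t_k$; for fixed $i$, the map $k\mapsto\sigma(i,k)$ is injective on $J$ and on $J'$ for the same reason, and no collision between the two halves is possible since their images lie in different parts by the previous observation. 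An injective self-map of the finite set $I$ is a bijection, and combined with the ``same part versus opposite part'' dichotomy this gives exactly the assertion: if the fixed index lies in $J$, the corresponding partial map of $\sigma$ restricts to bijections $J\to J$ and $J'\to J'$, whereas if it lies in $J'$ it restricts to bijections $J\to J'$ and $J'\to J$.

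I do not expect a genuine obstacle here: the whole argument is bookkeeping on cosets of $L$ inside $L_{\max}$. The only points requiring a little care are keeping the $\pm$ conventions consistent across the case distinctions and checking that $\sigma$ is well defined as a genuine function on all of $I\times I$ --- both handled by the single observation that $t_i-t_k$ always lies in $\Lambda_{t_k}\in\set{\Lambda,-\Lambda}$.
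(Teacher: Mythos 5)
Your proposal is correct and follows essentially the same route as the paper's proof: define $J=\set{i\mid t_i\in L_{\max}}$, use that $\Lambda_{t_k}$ equals $\Lambda$ or $-\Lambda$ according to the class of $t_k$ modulo $L_{\max}$ to define $\sigma$ and track which part $\sigma(i,k)$ lands in, and conclude bijectivity from injectivity on a finite set. Your write-up is in fact slightly more explicit than the paper's on the counting $\abs{J}=\abs{J'}=[L_{\max}:L]$ and on the case analysis for fixed $i$, which the paper leaves as ``an easy case by case verification.''
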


\begin{proof}
	If $m(\Lambda)=2$ then the maximal period lattice $L_{\max}$ of $\Lambda$ contains $L$ with index $\frac{m}{2}$ and, as mentioned above, if $v$ is any element in $\Lambda \setminus  L_{\max}$, one has
	$$\Lambda =\bigcup_{i=1}^m t_i+L =  L_{\max} \cup \left(v +  L_{\max}\right).$$ Consequently,  $t_i \in L_{\max}$ for exactly one half of the indices $ i\in I$ and $\Lambda-t_i = \Lambda$ or $-\Lambda$ according as $t_i $ belongs to $L_{\max}$ or $v +  L_{\max}$. Setting $J=\left\lbrace i \in I \mid t_i \in L_{\max}\right\rbrace$ and $J'=\left\lbrace i \in I \mid t_i \in v +  L_{\max}\right\rbrace$ one can construct the map $\sigma$ as follows:
	\begin{itemize}
		\item If $k \in J$, that is to say if  $t_k \in   L_{\max}$, then $\Lambda-t_k = \Lambda=\bigcup_{i=1}^m t_i+L$ so that  for all $i \in I$ there is a well-defined index $\sigma(i,k) \in I$ such that $t_i-t_k \equiv t_{\sigma(i,k)} \!\!\mod L$. On the other hand, since $\Lambda-t_k = \Lambda= L_{\max} \cup \left(v +  L_{\max}\right)$, we infer that $t_i-t_k$ belongs to $ L_{\max}$ or  $v +  L_{\max}$, depending on whether $t_i$ is  in $ L_{\max}$ ($\Leftrightarrow i\in J$) or $t_i$ is in $v +  L_{\max}$ ($\Leftrightarrow i\in J'$), which means that $\sigma(\cdot,k)$ maps $J$ to $J$ and $J'$ to $J'$. The injectivity of $\sigma(\cdot,k)$ is straightforward, as the $t_i$'s are noncongruent $\mod L$.
		\item If $k \in J'$, then $t_k \in v +   L_{\max}$ and $\Lambda-t_k = -\Lambda$ so that  for all $i \in I$ there is a well-defined index $\sigma(i,k) \in I$ such that $t_i-t_k \equiv -t_{\sigma(i,k)} \!\!\mod L$. Now, since $\Lambda-t_k = -\Lambda=- L_{\max} \cup -\left(v +  L_{\max}\right)$ we have this time that $t_i-t_k$ belongs to $- L_{\max}$ or $-\left(v +  L_{\max}\right)$ according to $i$ being in $J'$ or $J$, which means that $\sigma(i,k)$ belongs to $J$ if $i\in J'$ and to $J'$ if $i\in J$. Again, the injectivity of $\sigma(\cdot,k)$ is clear. 
	\end{itemize}
	It remains to prove that, for fixed $i$, the map $\sigma(i,\cdot)$ also satisfies the required properties, which proceeds by an easy case by case verification, as above. 
\end{proof}

Using the results of Section \ref{sec:two}, we know that in a suitable
neighborhood of our given set $\Lambda_0=\bigcup_{i=1}^m (t^0_i +
L_0)$, the $f$-energy varies according to 
\begin{equation}\label{eqn:local_energy} 
E_f(H,t) \coloneqq\frac{1}{m}  
\; \sum_{1\leq i,j\leq m} \; \sum_{0\neq w \in t^0_i-t^0_j+L_0}  f(\exp(H)[w+t_i-t_j])
\end{equation}
In what follows we will extensively use the following reordering of contributions:

\begin{lemma} \label{lem:ReorderingLemma}
Suppose $\Lambda_0=\bigcup_{i=1}^m (t^0_i + L_0) =  L_{\max} \cup (v+ L_{\max})$ 
with $v \in\R^n$ and lattice $L_0 \subseteq L_{\max}\subset \R^n$ is a $2$-periodic set, and that
$t^0_i\in L_{\max}$ for $i\in J=\{1,\ldots,\frac{m}{2}\}$, $t^0_i \in v + L_{\max}$ for $i\in J'=\{\frac{m}{2}+1,\ldots, m\}$.
Then
\begin{eqnarray*}
E_f(H,t)
& = &
\frac{2}{m^2}\left[
\left(
 \sum_{0\not= w \in  L_{\max}} \; \sum_{i=1}^m \; \sum_{k\in J} \;  
     f\left( \exp(H) [ w+t_i-t_{\sigma(i,k)}]\right)
\right)
\right. \\
& & \qquad +
\left(
\sum_{0\not= w \in -(v+ L_{\max})} \; \sum_{i\in J} \; \sum_{k\in J'} \;   
     f\left( \exp(H) [ w+t_i-t_{\sigma(i,k)}]\right)
\right) \\
& & \qquad +
\left.
\left(
\sum_{0\not= w \in (v+ L_{\max})} \; \sum_{i\in J'} \; \sum_{k\in J'} \;   
     f\left( \exp(H) [ w+t_i-t_{\sigma(i,k)}]\right)
\right) 
\right]
,
\end{eqnarray*}
where $\sigma(i,k)$ is defined as in Lemma~\ref{lem:perm}, that is
\[
\displaystyle
t^0_i-t^0_k \equiv_{\!\!\mod L_0} 
\left\{
\begin{array}{ll}
\phantom{-}t^0_{\sigma(i,k)} \; \mbox{if} \; k\in J\\
-t^0_{\sigma(i,k)} \; \mbox{if} \; k\in J'
\end{array}
\right.
.
\]
\end{lemma}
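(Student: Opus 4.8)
The plan is to start from the expansion \eqref{eqn:local_energy} and regroup the double sum $\sum_{1\le i,j\le m}$ according to the partition $I=J\sqcup J'$ furnished by Lemma~\ref{lem:perm}, using that $\Lambda_0$ is $2$-periodic. The outer index $j$ ranges over $J$ or over $J'$, and in each case the inner sum over $w\in t^0_i-t^0_j+L_0$ is rewritten by shifting: since $t^0_i-t^0_j\equiv \pm t^0_{\sigma(i,j)}\mod L_0$, setting $w'=w\mp t^0_{\sigma(i,j)}$ identifies the coset $t^0_i-t^0_j+L_0$ with $L_0$ (when $j\in J$, so $t^0_i-t^0_j\in L_0+t^0_{\sigma(i,j)}$ and actually lies in $L_{\max}$-coset structure) or with $\pm(v+L_{\max})$-type cosets; one has to be careful that the relevant sums run over $0\not=w\in L_{\max}$, $0\not=w\in -(v+L_{\max})$, $0\not=w\in v+L_{\max}$ as in the three bracketed terms of the statement, and that $t_i-t_{\sigma(i,k)}$ is exactly the displacement that survives after this shift.

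The key steps, in order, are: (1) rewrite $E_f(H,t)=\frac1m\sum_{k\in J}(\cdots)+\frac1m\sum_{k\in J'}(\cdots)$, splitting the sum on the outer index (renamed $k$); (2) for $k\in J$, use $t^0_i-t^0_k\equiv t^0_{\sigma(i,k)}\mod L_0$ to substitute $w\mapsto w-t^0_{\sigma(i,k)}$, which turns $\sum_{0\not=w\in t^0_i-t^0_k+L_0}f(\exp(H)[w+t_i-t_k])$ into $\sum_{0\not=w\in L_{\max}\text{-type coset}}f(\exp(H)[w+t_i-t_{\sigma(i,k)}])$ — here one uses that as $i$ ranges over $I$ and $k$ is fixed in $J$, the union of cosets $t^0_i-t^0_k+L_0$ covers $L_{\max}$ (the $i\in J$ part) and $v+L_{\max}$ (the $i\in J'$ part), matching the first and part of the other bracketed terms; (3) for $k\in J'$, use $t^0_i-t^0_k\equiv -t^0_{\sigma(i,k)}\mod L_0$ and substitute accordingly, noting $\sigma(i,k)\in J'$ for $i\in J$ and $\sigma(i,k)\in J$ for $i\in J'$, producing the $-(v+L_{\max})$ and $v+L_{\max}$ contributions; (4) collect the three resulting blocks and observe that each block, after the substitution, no longer depends on $k$ through the summand except via the translation vector $t_{\sigma(i,k)}$, but because $\sigma(\cdot,k)$ is a bijection the inner sum over $i$ can be taken as a sum over $\sigma(i,k)$; then one checks the number of choices of $k$ (namely $|J|=|J'|=m/2$) produces the prefactor, giving $\frac1m\cdot\frac{m}{2}\cdot\frac{1}{?}$ — tracking this bookkeeping is what yields the overall $\frac{2}{m^2}$.

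The main obstacle I expect is the careful bookkeeping of \emph{which} coset each shifted sum lands in and \emph{how many times} each of the three blocks is counted, i.e.\ reconciling the outer count over $k\in J$ (resp.\ $k\in J'$), the inner bijections $\sigma(\cdot,k)$ and $\sigma(i,\cdot)$ from Lemma~\ref{lem:perm}, and the original $\frac1m$ normalization so that they combine to exactly $\frac{2}{m^2}$ with the stated coset ranges. In particular one must verify that the ``$i\in J$, $k\in J$'' and ``$i\in J'$, $k\in J'$'' contributions after substitution both land in $L_{\max}$-cosets (and combine into the first bracket summed over all $i$), while the mixed blocks $i\in J,k\in J'$ and $i\in J',k\in J'$ (respectively $i\in J',k\in J$) land in $\pm(v+L_{\max})$; getting the signs of the substitution $w\mapsto w\pm t^0_{\sigma(i,k)}$ consistent with the sign in $t^0_i-t^0_k\equiv\mp t^0_{\sigma(i,k)}$ is the delicate point. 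Once the indexing is pinned down, the identity is a purely formal rearrangement with no analytic content, so after step~(4) the proof concludes immediately.
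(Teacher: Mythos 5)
Your step (2) does not work as written, and it is the heart of the matter. If you keep the outer index $k$ (your renaming of $j$) and perform the change of variable $w\mapsto w-t^0_{\sigma(i,k)}$ in the inner sum, the domain $t^0_i-t^0_k+L_0=t^0_{\sigma(i,k)}+L_0$ becomes the single coset $L_0$ (not a full $L_{\max}$-type coset), and the summand becomes $f\bigl(\exp(H)[\,w+t^0_{\sigma(i,k)}+t_i-t_k\,]\bigr)$ --- the perturbation vector is still $t_i-t_k$, with the base point $t^0_{\sigma(i,k)}$ absorbed into $w$. No substitution in $w$ can convert the free parameter $t_k$ into $t_{\sigma(i,k)}$: the only legitimate way to make $t_{\sigma(i,k)}$ appear in the summand is to \emph{reindex the translation index itself}, replacing the sum over $j$ by a sum over $k$ via the bijection $j=\sigma(i,k)$ (for fixed $i$), which is what the paper does. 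Your appeal to ``the union of cosets $t^0_i-t^0_k+L_0$ covers $L_{\max}$ as $i$ varies'' also cannot be used directly, because the summand depends on $i$ through $t_i$; you cannot merge sums over different cosets indexed by different $i$ into one sum over $L_{\max}$.

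The second gap is the one you flag yourself with the ``$\frac1m\cdot\frac m2\cdot\frac1?$'': after the correct reindexing, each pair $(i,k)$ still contributes a sum over a \emph{single} $L_0$-coset, whereas the asserted formula has $w$ running over all of $L_{\max}$ (resp.\ $\pm(v+L_{\max})$), which is $[L_{\max}:L_0]=\frac m2$ times larger. Bridging this is precisely where the paper does its real work: it argues that shifting the summation coset by $-t^0_j$ leaves each part unchanged, so one may average over the $|J|=\frac m2$ shifts, whose union exhausts $L_{\max}$; this averaging is what produces the extra factor $\frac{2}{m}$ and hence the prefactor $\frac{2}{m^2}$. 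Your plan neither states nor justifies this shift-invariance, and it is not a ``purely formal rearrangement'' --- it is the one step in the proof that requires an actual argument. As it stands the proposal establishes only a consistent count of terms, not the identity.
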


\begin{proof}
For the local expression of energy,
we start with the expression~\eqref{eqn:local_energy} for $E_f(H,t)$ and
split the sum over $i,j\in\{1,\ldots, m\}=J\cup J'$ into four parts
1A, 2A, 1B, 2B according to $j\in J$ or $j\in J'$ (cases with 1 or 2)
and $i\in J$ or $i \in J'$ (cases with~A or~B):
\[
\frac{1}{m} 
\left[ 
\underbrace{\left( \sum_{(i,j)\in J\times J} (\ast) \right)}_{\mbox{part 1A}} 
+
\underbrace{\left( \sum_{(i,j)\in J\times J'} (\ast)  \right)}_{\mbox{part 2A}}  
+
\underbrace{\left( \sum_{(i,j)\in J'\times J} (\ast)  \right)}_{\mbox{part 1B}}  
+
\underbrace{\left( \sum_{(i,j)\in J'\times J'} (\ast)  \right)}_{\mbox{part 2B}}  
\right]
,
\] 
where $(\ast)$ is a placeholder 
for $\displaystyle\sum_{0\neq w \in t^0_i-t^0_j+L_0}  f(\exp(H)[w+t_i-t_j])$.

\bigskip

{\bf Part 1A:} 
First we reorder terms by substituting $j$ with $\sigma(i,k)$.
Here we use that $j=\sigma(i,k)$ is a bijection of $J$ for fixed $i$, mapping index~$k$ to~$j$.
So Part~1A is equal to
\[
\sum_{i \in J} \; \sum_{k \in J} \; \sum_{0\neq w \in t^0_i-t^0_{\sigma(i,k)}+L_0}  f(\exp(H)[w+t_i-t_{\sigma(i,k)}])
.
\]
The translate $t^0_i-t^0_{\sigma(i,k)}+L_0$ can be written as $t^0_{\ell}+L_0$ with 
$\ell =\sigma(i,j)=\sigma(i,\sigma(i,k))\in J$ depending on~$i$ and~$k$. 
So we get for Part~1A:
\[
\sum_{i \in J} \; \sum_{k \in J} \; \sum_{0\neq w \in t^0_{\ell}+L_0}  f(\exp(H)[w+t_i-t_{\sigma(i,k)}])
\]
with the vectors $w \in t^0_{\ell}+L_0$ for $\ell \in J$ 
running through all non-zero elements of the lattice~$ L_{\max}$.
Therefore a shift of the $w$ by any vectors of $ L_{\max}$ does not effect the outcome 
for Part~1A. For every $j\in J$ we may shift by $-t^0_j$ and get the same value 
as for Part~1A also in
\[
\sum_{i \in J} \; \sum_{k \in J} \; \sum_{0\neq w \in t^0_{\ell} -t^0_j  +L_0}  f( \cdots )
=
\sum_{i \in J} \; \sum_{k \in J} \; \sum_{0\neq w \in t^0_{\sigma(l,j)}+L_0}  f( \cdots )
.
\]
Here and in the following $f( \cdots )$ abbreviates $f(\exp(H)[w+t_i-t_{\sigma(i,k)}])$.
Since $$\bigcup_{j\in J}  t^0_{\sigma(l,j)}+L_0 =  L_{\max}$$ for every fixed~$\ell$, we can take an average over all $j\in J$
and get for Part~1A:
\[
\frac{1}{|J|}
\sum_{i \in J} \; 
\sum_{k \in J} \; 
\sum_{0\neq w \in  L_{\max}} 
f( \cdots )
=
\frac{1}{|J|}
\sum_{0\neq w \in  L_{\max}} \; 
\sum_{i \in J} \; 
\sum_{k \in J}
 f( \cdots )
\]

\bigskip

{\bf Part 2A:}  
First we reorder terms again, by substituting $j$ with $\sigma(i,k)$.
Here we use that $j=\sigma(i,k)$ is a bijection of $J'$ for fixed $i$, mapping index~$k$ to~$j$.
So Part~2A is equal to
\[
\sum_{i \in J} \; \sum_{k \in J'} \; \sum_{0\neq w \in t^0_i-t^0_{\sigma(i,k)}+L_0}  f(\exp(H)[w+t_i-t_{\sigma(i,k)}])
.
\]
The translate $t^0_i-t^0_{\sigma(i,k)}+L_0$ can be written as $-t^0_{\ell}+L_0$ with 
$\ell =\sigma(i,j)=\sigma(i,\sigma(i,k))\in J'$ depending on~$i$ and~$k$. 
So Part~2A can be written as:
\[
\sum_{i \in J} \; \sum_{k \in J'} \; \sum_{0\neq w \in -t^0_{\ell}+L_0}  f(\exp(H)[w+t_i-t_{\sigma(i,k)}])
\]
with the vectors $w \in -t^0_{\ell}+L_0$ for $\ell \in J'$ 
running through all non-zero elements of the lattice translate~$-( v+L_{\max})$.
A shift of the $w$ by any vectors of $ L_{\max}$ does not effect the outcome 
for Part~2A. So for every $j\in J$ we may shift by $t^0_j$ and get the same value 
as for Part~2A also in
\[
\sum_{i \in J} \; \sum_{k \in J} \; \sum_{0\neq w \in t^0_j -t^0_{\ell}  +L_0}  f( \cdots )
=
\sum_{i \in J} \; \sum_{k \in J} \; \sum_{0\neq w \in -t^0_{\sigma(j,l)}+L_0}  f( \cdots )
.
\]
Here, $\sigma(j,l)\in J'$ since $j\in J$ and $\ell \in J'$,
and $f( \cdots )$ abbreviates $f(\exp(H)[w+t_i-t_{\sigma(i,k)}])$ again.
Since $\bigcup_{j\in J}  -t^0_{\sigma(j,l)}+L_0 = -( v+L_{\max})$ for every fixed~$\ell$, we can take an average over all $j\in J$
and get for Part~2A:
\[
\frac{1}{|J|}
\sum_{i \in J} \; 
\sum_{k \in J'} \; 
\sum_{0\neq w \in -( v+L_{\max})} 
f( \cdots )
=
\frac{1}{|J|}
\sum_{0\neq w \in -( v+L_{\max})} \; 
\sum_{i \in J} \; 
\sum_{k \in J'}
 f( \cdots )
\]

\bigskip

{\bf Part 1B:} 
We start by substituting $j$ with $\sigma(i,k)$ again, where
$j=\sigma(i,k)$ is a bijection from $J'$ to $J$ for fixed $i$, mapping index~$k$ to~$j$.
So Part~1B is equal to
\[
\sum_{i \in J'} \; \sum_{k \in J'} \; \sum_{0\neq w \in t^0_i-t^0_{\sigma(i,k)}+L_0}  f(\exp(H)[w+t_i-t_{\sigma(i,k)}])
.
\]
The translate $t^0_i-t^0_{\sigma(i,k)}+L_0$ can be written as $t^0_{\ell}+L_0$ with 
$\ell =\sigma(i,j)=\sigma(i,\sigma(i,k))\in J'$ depending on~$i$ and~$k$. 
So Part~1B can be written as:
\[
\sum_{i \in J'} \; \sum_{k \in J'} \; \sum_{0\neq w \in t^0_{\ell}+L_0}  f(\exp(H)[w+t_i-t_{\sigma(i,k)}])
\]
with the vectors $w \in t^0_{\ell}+L_0$ for $\ell \in J'$ 
running through all non-zero elements of the lattice translate~$ v+L_{\max}$.
Again, a shift of the $w$ by any vectors of $ L_{\max}$ does not effect the outcome 
for Part~1B. So for every $j\in J$ we may shift by $-t^0_j$ and get the same value 
as for Part~1B also in
\[
\sum_{i \in J'} \; \sum_{k \in J'} \; \sum_{0\neq w \in t^0_{\ell} -t^0_j  +L_0}  f( \cdots )
=
\sum_{i \in J'} \; \sum_{k \in J'} \; \sum_{0\neq w \in t^0_{\sigma(l,j)}+L_0}  f( \cdots )
.
\]
Here, $\sigma(l,j)\in J'$ since $\ell \in J'$ and $j\in J$,
and $f( \cdots )$ abbreviates $f(\exp(H)[w+t_i-t_{\sigma(i,k)}])$ again.
Since $\bigcup_{j\in J}  t^0_{\sigma(l,j)}+L_0 = ( v+L_{\max})$ for every fixed~$\ell$, we can take an average over all $j\in J$
and get for Part~1B:
\[
\frac{1}{|J|}
\sum_{i \in J'} \; 
\sum_{k \in J'} \; 
\sum_{0\neq w \in ( v+L_{\max})} 
f( \cdots )
=
\frac{1}{|J|}
\sum_{0\neq w \in ( v+L_{\max})} \; 
\sum_{i \in J'} \; 
\sum_{k \in J'}
 f( \cdots )
\]

\bigskip

{\bf Part 2B:} 
We reorder terms by substituting $j$ with $\sigma(i,k)$
where $j=\sigma(i,k)$ is a bijection from $J$ to $J'$ for fixed $i$, 
mapping index~$k$ to~$j$.
So Part~2B is equal to
\[
\sum_{i \in J'} \; \sum_{k \in J} \; \sum_{0\neq w \in t^0_i-t^0_{\sigma(i,k)}+L_0}  f(\exp(H)[w+t_i-t_{\sigma(i,k)}])
.
\]
The translate $t^0_i-t^0_{\sigma(i,k)}+L_0$ can be written as $-t^0_{\ell}+L_0$ with 
$\ell =\sigma(i,j)=\sigma(i,\sigma(i,k))\in J$ depending on~$i$ and~$k$. 
So we get for Part~2B:
\[
\sum_{i \in J'} \; \sum_{k \in J} \; \sum_{0\neq w \in -t^0_{\ell}+L_0}  f(\exp(H)[w+t_i-t_{\sigma(i,k)}])
\]
with the vectors $w \in -t^0_{\ell}+L_0$ for $\ell \in J$ 
running through all non-zero elements of the lattice~$ L_{\max}=- L_{\max}$.
A shift of the $w$ by any vectors of $ L_{\max}$ does not effect the outcome 
for Part~2B. In particular, for every $j\in J$ we may shift by $t^0_j$ and get the same value 
as for Part~2B also in
\[
\sum_{i \in J'} \; \sum_{k \in J} \; \sum_{0\neq w \in t^0_j -t^0_{\ell}  +L_0}  f( \cdots )
=
\sum_{i \in J'} \; \sum_{k \in J} \; \sum_{0\neq w \in t^0_{\sigma(j,l)}+L_0}  f( \cdots )
.
\]
Here, $\sigma(j,l)\in J$ since $j,\ell \in J$ 
and $f( \cdots )$ abbreviates $f(\exp(H)[w+t_i-t_{\sigma(i,k)}])$.
Since $\bigcup_{j\in J}  t^0_{\sigma(j,l)}+L_0 =  L_{\max}$ for every fixed~$\ell$, we can take an average over all $j\in J$
and get for Part~2B:
\[
\frac{1}{|J|}
\sum_{i \in J'} \; 
\sum_{k \in J} \; 
\sum_{0\neq w \in  L_{\max}} 
f( \cdots )
=
\frac{1}{|J|}
\sum_{0\neq w \in  L_{\max}} \; 
\sum_{i \in J'} \; 
\sum_{k \in J}
 f( \cdots )
\]

\bigskip

{\bf Summing all up:} 
Finally, we can combine Parts~1A and~2B to get:
\[
\frac{1}{|J|} 
 \sum_{0\not= w \in  L_{\max}} \; \sum_{i \in  J\cup J'} \; \sum_{k\in J} \;  
     f\left( \exp(H) [ w+t_i-t_{\sigma(i,k)}] \right)
\]
Altogether, with Parts 2A and 1B and with the observation $|J|=\frac{m}{2}$, 
we get the asserted formula for $E_f(H,t)$.
\end{proof}

\section{The $\mathsf{D}_n^+$ example}\label{sec:four}


For $n\geq 1$ the lattice $\mathsf{D}_n$ consists of all 
integral vectors with an even coordinate sum:
$$
\mathsf{D}_n = \left\{ x\in \Z^n \; : \; x_1 + \ldots + x_n \; \mbox{ even } \right\}
$$
The lattice is sometimes also referred to as the {\em checkerboard lattice}.
It gives one of the two families of irreducible {\em root lattices} which exist in every dimension, the other one being~$\mathsf{A}_n$.

The set $\mathsf{D}^+_n$ is defined as the $2$-periodic set
$$
\mathsf{D}^+_n \; = \; \mathsf{D}_n \cup \left( \frac{\mathbb{1}}{2} + \mathsf{D}_n \right)
,
$$
where $\mathbb{1}$ stands for the all-one vector $(1,\ldots , 1)^t $
It is easy to show that $\mathsf{D}^+_n$ is a lattice if and only if $n$ is even,
as the vector $2 \frac{\mathbb{1}}{2}=\mathbb{1}$ is an element of $\mathsf{D}_n$ only 
if $n$ is even.


For $n=8$, $\mathsf{D}^+_n$ is equal to the famous root lattice
$\mathsf{E}_8$, with a lot of remarkable properties, not only for
energy minimization (see \eg \cite{cs1999}).
For $n=9$, $\mathsf{D}^+_n$ is a  $2$-periodic non-lattice set
sharing several of the remarkable properties of~$\mathsf{E}_8$.
It is for instance also a conjectured optimal sphere packing in its dimension,
although as such it is not unique, but part of an infinite family of 
``fluid diamond packings'' in dimension~$9$.
Besides its putative optimality for the more general energy
minimization problem (see \cite{PhysRev09}),
$\mathsf{D}^+_9$ has for instance also been found to give the best known set 
for the quantization problem, being in particular better than any
lattice in dimension~$9$ (see \cite{ae1998}).


In the following we collect some of the properties of
$\mathsf{D}^+_n$, which are needed in later sections.
We start with its symmetries.


The finite orthogonal group preserving $\mathsf{D}_n$ contains 
the {\em hyperoctahedral group},
which is isomorphic to $S_n\rtimes (\Z/2\Z)^{n}$,
since every coordinate permutation and every sign flip leaves the parity of 
the coordinate sum unchanged.
Only for $n=4$ there exists an additional threefold symmetry 
(see \eg \cite[Section 4.3]{MR1957723}).

The group $\Aut_0 \mathsf{D}^+_n= \left\lbrace \varphi  \in \Aut \mathsf{D}_n \mid \varphi(\mathsf{D}^+_n)=\mathsf{D}^+_n \right\rbrace$, contains all the 
coordinate permutations and every even number of sign flips,
so it is a group isomorphic to $S_n\rtimes (\Z/2\Z)^{n-1}$.
This is precisely the {\em Weyl group} $W(\mathsf{D}_n)$ of the $\mathsf{D}_n$ {\em root system}
(the minimal vectors of $\mathsf{D}_n$). 

For even $n$, this gives all automorphisms of the lattice
$\mathsf{D}^+_n$ 
(see \lc), \ie we have $\Aut \mathsf{D}^+_n=\Aut_0 \mathsf{D}^+_n=W(\mathsf{D}_n)$. 

For odd $n$, the maximal period lattice of $\mathsf{D}^+_n$ is $\mathsf{D}_n$, and it
follows from the discussion in Remark~\ref{rem:auto-discuss} that $\Aut_0
\mathsf{D}^+_n=W(\mathsf{D}_n)$ has index $2$ in $\Aut \mathsf{D}^+_n=W(\mathsf{D}_n) \cup -W(\mathsf{D}_n)$. The
orthogonal automorphisms of $\mathsf{D}^+_n$ coming from $W(\mathsf{D}_n)$ correspond to
affine isometries fixing $0$ and $\frac{\mathbb{1}}{2}$ 
modulo $L_{\max}=\mathsf{D}_n$ while those from $ -W(\mathsf{D}_n)$ correspond to affine isometries exchanging $0$ and $\frac{\mathbb{1}}{2}$. In particular, all non-empty shells $\Lambda_x(r)$ of $\mathsf{D}_n^+$ are fixed by $W(\mathsf{D}_n)$. We will take advantage of this invariance property in the sequel, using classical results about the invariant theory of the Weyl group $W(\mathsf{D}_n)$.

\begin{proposition}
Every non-empty shell $\Lambda(r)$ and $\Lambda_x(r)$ of $\Lambda=\mathsf{D}_n^+$ 
forms a spherical $3$-design.
\end{proposition}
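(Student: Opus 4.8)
The plan is to exploit the symmetry recorded just before the proposition together with a short invariant-theory computation. Recall that every non-empty shell $\Lambda_x(r)$ and $\Lambda(r)$ of $\Lambda=\mathsf{D}_n^+$ is stable under the Weyl group $W(\mathsf{D}_n)$ of coordinate permutations and even sign changes. The key point is that a finite subset $X$ of the sphere of radius $r$ which is stable under a finite group $G$ of orthogonal transformations is automatically a spherical $t$-design as soon as there is no nonzero $G$-invariant harmonic polynomial of degree $k$ for $1\le k\le t$: given such a harmonic $P$, its $G$-average $\widetilde P=\frac{1}{|G|}\sum_{g\in G}P\circ g$ is again harmonic and homogeneous of the same degree, hence $\widetilde P=0$, while $\sum_{x\in X}P(x)=\sum_{x\in X}\widetilde P(x)$ because $X$ is $G$-stable; so $\sum_{x\in X}P(x)=0$, which is the defining relation of a $t$-design. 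Moreover, for a $2$-periodic set the weight $\nu$ of Theorem~\ref{th:periodic-critical} is constant on each shell $\Lambda(r)$ (it equals $1$ on lattice shells and $\frac{1}{2}$ on the others), so the vanishing of all such harmonic sums also yields the \emph{weighted} $3$-design statement.

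\textbf{Invariant-theory input.} It thus remains to check that, for $1\le k\le 3$, there is no nonzero $W(\mathsf{D}_n)$-invariant harmonic polynomial of degree $k$ in $n$ variables, and likewise for the full hyperoctahedral group $W(\mathsf{B}_n)=\mathfrak S_n\ltimes(\Z/2\Z)^n$, which in fact already stabilizes every shell $\Lambda(r)$ as well as every shell $\Lambda_x(r)$ consisting of integral vectors; it fails to stabilize the half-integral shells of $\mathsf{D}_n^+$ when $n$ is odd (a single sign change then moves such a vector to the other coset of $\mathsf{D}_n$), which is precisely why the smaller group $W(\mathsf{D}_n)$ must be used there. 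By the classical description of the invariant rings, $\R[x_1,\dots,x_n]^{W(\mathsf{B}_n)}$ is freely generated by the power sums $p_2,p_4,\dots,p_{2n}$ with $p_{2j}=\sum_i x_i^{2j}$ (equivalently, by the elementary symmetric functions of $x_1^2,\dots,x_n^2$), while $\R[x_1,\dots,x_n]^{W(\mathsf{D}_n)}$ is freely generated by $p_2,p_4,\dots,p_{2(n-1)}$ together with $e_n=x_1\cdots x_n$. Hence for $n\ge 4$ neither invariant ring contains a nonzero element of degree $1$ or $3$, and the only invariants of degree $2$ are the scalar multiples of $p_2=\|x\|^2$, whose harmonic projection vanishes. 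So there are no nonzero invariant harmonics in degrees $1,2,3$, and the averaging argument above proves the proposition for all $n\ge 4$.

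\textbf{Main obstacle.} The one genuine obstacle is the low-dimensional behaviour of the extra $W(\mathsf{D}_n)$-invariant $e_n=x_1\cdots x_n$, whose degree is $n$ and hence lies in the range $k\le 3$ when $n\le 3$. For $n=3$ the polynomial $x_1x_2x_3$ is harmonic and $W(\mathsf{D}_3)$-invariant, and correspondingly the half-integral shells of $\mathsf{D}_3^+$ are regular tetrahedra, which are spherical $2$- but not $3$-designs; so for $\Lambda_x(r)$ the statement genuinely concerns $n\ge 4$, whereas $\Lambda(r)$ remains a $3$-design in every dimension, being stable under $W(\mathsf{B}_n)$. The case $n=2$ causes no trouble since $\mathsf{D}_2^+=\Z^2$ is a lattice whose shells are $W(\mathsf{B}_2)$-stable. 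Once this small-dimensional bookkeeping is in place there is nothing left to compute: the assertion is an immediate consequence of the structure of the invariant rings of $W(\mathsf{B}_n)$ and $W(\mathsf{D}_n)$, and in particular covers the cases ($n$ odd, $n\ge 9$) relevant for the rest of the paper.
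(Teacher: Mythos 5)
Your proof is correct and follows essentially the same route as the paper: both rest on the $W(\mathsf{D}_n)$-invariance of the shells together with the absence of nonzero $W(\mathsf{D}_n)$-invariant harmonic polynomials in degrees $1$ to $3$. The paper cites the strong eutaxy criterion for the degree-$2$ part and Humphreys' table of invariant degrees for degree $3$, whereas you carry out the averaging argument and the invariant-ring bookkeeping explicitly (including the correct caveat for $n\le 3$), but the underlying mechanism is identical.
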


\begin{proof}
For a finite set $X$ on a sphere of radius~$r$ being a spherical $3$-design is 
equivalent to
$$
\sum_{x\in X} ( y^t x )^2 = c \cdot ( y^t y )
\quad \mbox {and} \quad
\sum_{x\in X} ( y^t x )^3 = 0
$$
for some constant $c$ and any $y \in \R^n$.
The first property is actually that of a $2$-design. 
It is satisfied for any set $X$ which is invariant under a group that
acts irreducibly on $\R^n$ (see \cite[Theorem 3.6.6.]{MR1957723} where the synonymous expression "strongly eutactic configuration" is used ) . 

The second property is satisfied, since the Weyl group of the root system~$\mathsf{D}_n$ 
has no non-zero invariant homogeneous polynomials of degree~$3$
(see \cite[\S 3.7,  Table 1]{MR1066460}).
\end{proof}
\begin{rem}
 For half-integral $r$ the shells $\Lambda_x(r)$ are not centrally symmetric  and therefore the $2$-design property does not immediately imply the $3$-design property.
\end{rem}

As a consequence of the preceding proposition, $\mathsf{D}_n^+$
satisfies the properties of Theorem~\ref{th:periodic-critical}: the shells 
$\Lambda_x(r)$ are balanced and $\Lambda(r)$ is a spherical $2$-design for all~$r$. 
Consequently, $\mathsf{D}_n^+$ is $f_c$-critical for any~$c>0$. On the other hand, the shells 
are not $4$-designs in general, as can be checked numerically for small $r$. If they 
were, then the study of the Hessian in the following section
would be significantly simpler, in the spirit of what 
was done in~\cite{MR2889159}.

\section{The Hessian of $2$-periodic sets 
              and in particular of $\mathsf{D}_n^+$} \label{sec:hessian}

For $f(r)=e^{-cr}$, we consider the Hessian of $E_{f_c}(H,\mathbf{t})$ at a $2$-periodic set $L_{\max}\cup(v+L_{\max})$ given by an 
$m$-periodic representation $\bigcup_{i=1}^m (t^0_i + L_0)$. We will then use the obtained expression for the Hessian to analyze whether or not $\mathsf{D}_n^+$ is a local minimum among $m$-periodic sets.

According to Section \ref{sec:two} this Hessian is equal to

\begin{equation}\label{eq:splithess} 
\frac{c}{m} 
\sum_{r>0} \left[I(r)+II(r)+III(r)\right] e^{-c r^2}
\end{equation}

where
\begin{align*}
I(r)&=\sum_{1\leq i,j\leq m}\;\sum_{w \in t^0_i-t^0_j+L_0, \|w \|=r}2c \left( w^{t}(t_i-t_j)\right)^2  -  \| t_i-t_j\|^2\\
II(r)&=\sum_{1\leq i,j\leq m}\;\sum_{w \in t^0_i-t^0_j+L_0, \|w \|=r}- 2w^{t}H(t_i-t_j)+2cw^{t}(t_i-t_j)H[w]\\
III(r)&=\sum_{1\leq i,j\leq m}\;\sum_{w \in t^0_i-t^0_j+L_0, \|w
        \|=r}\frac{c}{2}H[w]^2-\frac{1}{2}H^2\br{w} 
\end{align*}

In this decomposition we distinguish three types of terms: purely translational terms ($I(r)$), mixed terms ($II(r)$) and purely lattice changing terms ($III(r)$). Note that we can reorder individually each of these three terms according to Lemma \ref{lem:ReorderingLemma}. In particular we will use that
\begin{eqnarray*}
I(r)
& = &
\frac{2}{m}\left[
\left(
 \sum_{\substack{0\not= w \in  L_{\max}\\ \|w\| = r}} \; \sum_{i=1}^m \; \sum_{k\in J} \;  
     2c \left( w^{t}(t_i-t_{\sigma(i,k)})\right)^2  -  \| t_i-t_{\sigma(i,k)}\|^2
\right)
\right. \\
& & \qquad +
\left(
\sum_{\substack{0\not= w \in -(v+ L_{\max})\\ \|w\| = r}} \; \sum_{i\in J} \; \sum_{k\in J'} \;   
     2c \left( w^{t}(t_i-t_{\sigma(i,k)})\right)^2  -  \| t_i-t_{\sigma(i,k)}\|^2
\right) \\
& & \qquad +
\left.
\left(
\sum_{\substack{0\not= w \in (v+ L_{\max})\\ \|w\| = r}} \; \sum_{i\in J'} \; \sum_{k\in J'} \;   
     2c \left( w^{t}(t_i-t_{\sigma(i,k)})\right)^2  -  \| t_i-t_{\sigma(i,k)}\|^2
\right) 
\right],
\end{eqnarray*}
where we assume that $t^0_i\in L_{\max}$ for $i\in J=\{1,\ldots,\frac{m}{2}\}$, and $t^0_i \in v + L_{\max}$ for $i\in J'=\{\frac{m}{2}+1,\ldots, m\}$, which finally simplifies to
\begin{equation}
\begin{aligned}
I(r)
 &= 
\frac{2}{m}\left[
\left(
 \sum_{\substack{0\not= w \in  L_{\max}\\ \|w\| = r}} \; \sum_{i=1}^m \; \sum_{k\in J} \;  
     2c \left( w^{t}(t_i-t_{\sigma(i,k)})\right)^2  -  \| t_i-t_{\sigma(i,k)}\|^2
\right)
\right. \\
 &\qquad +
\left.
\left(
\sum_{\substack{0\not= w \in (v+ L_{\max})\\ \|w\| = r}} \; \sum_{i=1}^m \; \sum_{k\in J'} \;   
     2c \left( w^{t}(t_i-t_{\sigma(i,k)})\right)^2  -  \| t_i-t_{\sigma(i,k)}\|^2
\right) 
\right]
\end{aligned}
\end{equation}
since the inner sums are invariant towards negation of~$w$.

\subsection{Purely translational terms for $\mathsf{D}_n^+$}\label{ptt}

This formula simplifies for $\Lambda=\mathsf{D}^+_n$ with odd $n$ since the elements of a given non-²empty shell $\Lambda(r)$ are either all contained in $\mathsf{D}_n$
or in $\pm \frac{\mathbb{1}}{2} + \mathsf{D}_n$, depending on wether $r$ is integral or half-integral.
This gives us two cases to consider:

In one case, assuming $\Lambda(r)\subset \mathsf{D}_n$ we get, for fixed $r>0$,
\begin{equation} \label{eqn:CaseI_TranslationalPart}
I(r)=\frac{2}{m}\sum_{w\in \mathsf{D}_n, \|w\|=r} \;
\sum_{i=1}^m \;
\sum_{k=1}^{m/2} \;
2c
\left( w^{t}(t_i-t_{\sigma{(i,k)}})\right)^2
-  \| t_i-t_{\sigma{(i,k)}}\|^2
\end{equation}
and in the other case,

\begin{equation} \label{eqn:CaseI_TranslationalPartbis}
I(r)=\frac{2}{m}\sum_{w\in \left(\frac{\mathbb{1}}{2} + \mathsf{D}_n\right), \|w\|=r} \;
\sum_{i=1}^m \;
\sum_{k=m/2+1}^{m} \;
2c
\left( w^{t}(t_i-t_{\sigma{(i,k)}})\right)^2
-  \| t_i-t_{\sigma{(i,k)}}\|^2
\end{equation}

In both cases, we can use the relation
\begin{align}\label{eqn:TraceRewriteTranslationalTerms}
\left( w^{t}(t_i-t_{\sigma(i,k)})\right)^2 
& =  w^t (t_i-t_{\sigma(i,k)}) (t_i-t_{\sigma(i,k)})^t w\\
\notag & =\Tr \left( (t_i-t_{\sigma(i,k)}) (t_i-t_{\sigma(i,k)})^t (w w^t) \right).
\end{align}
to simplify the part of the sum involving $w$.

Using the linearity of the trace we get in the first case, that is if $\Lambda(r)\subset \mathsf{D}_n$,
\begin{equation}\label{tr1}
\begin{split}
&\sum_{w\in  \mathsf{D}_n, \|w\|=r} \;\sum_{i=1}^{m} \;
\sum_{k=1}^{m/2} \;
2c
\left( w^{t}(t_i-t_{\sigma{(i,k)}})\right)^2\\
=&\quad 2c
\Tr
\left(
\left(
\sum_{i=1}^m \;
\sum_{k=1}^{m/2} \; 
(t_i-t_{\sigma{(i,k)}}) (t_i-t_{\sigma{(i,k)}})^t 
\right)
\sum_{w\in \mathsf{D}_n, \|w\|=r} \;
(w w^t)
\right) 
\end{split}
\end{equation}
and in the second case, 
\begin{equation}\label{tr2} 
\begin{split}
&\sum_{w\in  \left( \frac{\mathbb{1}}{2}  + \mathsf{D}_n\right), \|w\|=r} \;\sum_{i=1}^{m} \;
\sum_{k=m/2+1}^{m} \;
2c
\left( w^{t}(t_i-t_{\sigma{(i,k)}})\right)^2\\
=&\quad 2c
\Tr
\left(
\left(
\sum_{i=1}^m \;
\sum_{k=m/2+1}^{m} \; 
(t_i-t_{\sigma{(i,k)}}) (t_i-t_{\sigma{(i,k)}})^t 
\right)
\sum_{w\in  \left( \frac{\mathbb{1}}{2}  + \mathsf{D}_n\right), \|w\|=r} \;
(w w^t)
\right).
\end{split}
\end{equation}
Using the $2$-design property of the shell $\Lambda(r)$
(see \eqref{2des}), and noticing that a typical element $w$ of $\mathsf{D}^+_n$ has weight
$$\nu(w)=\begin{cases}1 \text{ if } w \in \mathsf{D}_n\\ \frac{1}{2} \text{ if }w \in \frac{\mathbb{1}}{2}  + \mathsf{D}_n \end{cases}$$
 we may substitute
$\displaystyle\sum_{w\in \mathsf{D}_n, \|w\|=r} \; (w w^t)$
and $\displaystyle\sum_{w\in  \left( \frac{\mathbb{1}}{2}  + \mathsf{D}_n\right), \|w\|=r} \;
(w w^t)$ by
$\frac{r^2|\Lambda(r)|}{n} \id_n$. Therefore, formula \eqref{tr1} and \eqref{tr2}  simplify respectively to 
$$
\frac{2c r^2|\Lambda(r)|}{n} 
\sum_{i=1}^m \;
\sum_{k=1}^{m/2} \; 
\| t_i-t_{\sigma{(i,k)}}) \|^2
 \text{ and } 
 \frac{2c r^2|\Lambda(r)|}{n} 
\sum_{i=1}^m \;
\sum_{k=m/2+1}^{m} \;
\| t_i-t_{\sigma{(i,k)}} \|^2.
$$
We finally get
 
$$
I(r)=\frac{2}{m}\left( \frac{2c r^2}{n}-1 \right) |\Lambda(r)| 
\sum_{i=1}^m \;
\sum_{k=1}^{m/2} \; 
\| t_i-t_{\sigma{(i,k)}} \|^2
$$
in the first case ($\Lambda(r)\subset \mathsf{D}_n$) and
$$
I(r)=\frac{2}{m}\left( \frac{2c r^2}{n}-1 \right) |\Lambda(r)|
\sum_{i=1}^m \;
\sum_{k=m/2+1}^{m} \; 
\| t_i-t_{\sigma{(i,k)}} \|^2
$$
in the second case.
In both cases, this is nonnegative for all $c \geq \frac{n}{2r^2}$.

As $r^2\geq 2$ for $\mathsf{D}^+_n$ with $n\geq 8$, we overall find
for $\Lambda=\mathsf{D}^+_n$ 
that the purely translational terms are nonnegative
for all $c \geq \frac{n}{4}$ and $n\geq 8$.

\subsection{Mixed terms}\label{mt}

There are two different mixed terms in $II(r)$:
The first one is the sum over terms $w^{t}H(t_i-t_j)$
and the second one is the sum over terms $w^{t}(t_i-t_j)H[w]$.

The first sum evaluates to~$0$ for balanced configurations
as it can be reordered as follows:
$$
\sum_{i,j}\sum_{w \in t^0_i-t^0_j+L_0 ,  \|w\|=r}  w^{t}H(t_i-t_j)
=
\Tr \left( H \cdot ( \sum_{i,j}\sum_{w \in t^0_i-t^0_j+L_0 ,  \|w\|=r}  (t_i-t_j)w^t ) \right) 
$$
with
$$
\sum_{i,j}\sum_{w \in t^0_i-t^0_j+L_0 ,  \|w\|=r}  (t_i-t_j)w^t 
\;
=
\;
\sum_{1\leq k\leq m} 
t_k 
\left(
\; \sum_{u \in t^0_k-t^0_{\ell}+L_0 \, \atop \mbox{\footnotesize for some } \, 1\leq l \leq m , \; \|u\| = r}  \;
u
\right)
$$
as seen in the proof of Theorem~\ref{th:periodic-critical}.
Thus for balanced shells $\Lambda(r)$ this part of the Hessian vanishes.

\bigskip

For the second sum of mixed terms over a fixed shell we get:
\begin{eqnarray*}
& & 
\sum_{i,j}\sum_{w \in t^0_i-t^0_j+L_0 ,  \|w\|=r}
w^{t}(t_i-t_j)H[w] 
\\
&  =  & 
\sum_{i,j}\sum_{w \in t^0_i-t^0_j+L_0 ,  \|w\|=r}  
H[w] w^{t} t_i 
-
\sum_{i,j}\sum_{w \in t^0_i-t^0_j+L_0 ,  \|w\|=r}  
H[w] w^{t} t_j
\\
&  =  & 
\sum_{i} 
\left( \sum_j \sum_{w \in t^0_i-t^0_j+L_0 ,  \|w\|=r}   H[w] w^{t} \right) t_i 
-
\sum_{j} 
\left( \sum_i \sum_{w \in t^0_i-t^0_j+L_0 ,  \|w\|=r}   H[w] w^{t} \right) t_j 
\\
&  =  & 
\sum_{i} 
\left( \sum_{w\in -\Lambda_{t_i^0}, \|w\|=r}  H[w] w^{t} \right) t_i 
-
\sum_{j} 
\left( \sum_{w\in \Lambda_{t_j^0}, \|w\|=r}  H[w] w^{t} \right) t_j  
\\
& = & 
 -2 \sum_{i} 
\left( \sum_{w\in \Lambda_{t_i^0}, \|w\|=r}  H[w] w^{t} \right) t_i 
\end{eqnarray*}
Here the inner sum is a homogeneous degree~$3$ polynomial in $w$ evaluated
on the shell $\Lambda_{t_i^0}(r)$. Since these shells are $3$-designs 
for $\Lambda=\mathsf{D}_n^+$, the inner sum vanishes for all shells of
$\mathsf{D}_n^+$. Indeed, any degree $3$ homogeneous polynomial $P(w)$
decomposes uniquely as a sum $P(w)=F(w)+\|w\|^2 G(w)$ where $F(w)$ is
a harmonic degree $3$ polynomial and $G(w)$ is a linear
form. Consequently the sum $\sum_{w \in X} P(w)$, where $X$ is any
spherical $3$-design contained in a sphere of radius~$r$, reduces to
$$\sum_{w \in X} P(w)=\sum_{w \in X} F(w)+r^2\sum_{w \in X} G(w)$$
and both the sums $\sum_{w \in X} F(w)$ and $\sum_{w \in X} G(w)$
vanish from the $3$-design property.

\subsection{Purely lattice changing terms in the case of $\mathsf{D}_n^+$}
\label{sec:plc}

It remains to look at the sum $III(r)$, which we can also write as

\begin{equation}\label{eq:h1}
III(r)=m\sum_{w \in \Lambda_r} \nu(w)\left(
  \frac{c}{2}H[w]^2-\frac{1}{2}H^2\br{w} \right)
.
\end{equation}
This sum corresponds to an effect coming from local changes of 
the underlying lattice~$L_0$, respectively of $\mathsf{D}_n$ in case of
$\mathsf{D}_n^+$.

The sum of the terms $H^2[w]$ over any given shell simplifies to
\begin{equation}\label{eq:h2}
m\sum_{w\in \Lambda(r)}\nu(w)H^2\br{w}
\; = \; m\,\nu_r\dfrac{r^2 |\Lambda(r)|}{n}\Tr H^2
\end{equation}
because of the weighted-$2$-design property,  where
$\nu_r=\dfrac{\sum_{w \in \Lambda(r)} \nu(w)}{|\Lambda(r)|}$ is the
average weight on~$\Lambda(r)$. In the case of $\mathsf{D}_n^+$, the
weight is constant ($1$ or $\frac{1}{2}$) on each $\Lambda(r)$, so that \eqref{eq:h1} simplifies to
	\begin{equation}\label{eq:h3}
	m \, \nu_r \sum_{w \in \Lambda(r)} \left(
          \frac{c}{2}H[w]^2-\frac{1}{2}H^2\br{w} \right)
   \; = \; m \, \nu_r
        \sum_{w \in \Lambda(r)} \left( \frac{c}{2}H[w]^2\right)
      -m \, \nu_r\dfrac{r^2 |\Lambda(r)|}{n}\Tr H^2.
	\end{equation}

For the terms involving $(H[w])^2$, we note that, for any positive $r$, the 
polynomial $\sum_{w \in \Lambda(r)}H[w]^2$ 
is a quadratic $G$-invariant polynomial 
in $H$, where $G=\Aut_0(\mathsf{D}_n^+)=W(\mathsf{D}_n)$. We will make use of  the following 
classical result  about the polynomial invariants of $G$.
\begin{lemma} \label{lem:quadratic_poly_on_sd}
Let $n\geq 5$.
Then any homogeneous quadratic polynomial 
on the space $\Sn$ of symmetric $n\times n$ matrices $H=(h_{ij})$,
which is invariant under the Weyl group 
of $\mathsf{D}_n$
(acting on $\Sn$ by $H\mapsto M^t H M$ by the $n\times n$ permutation matrices and
diagonal matrices~$M$ having an even number of $-1$s and $1$s otherwise on the diagonal),
is a linear combination of the
three quadratic polynomials 
\[
(\Tr H)^2 =  \sum_{i,j=1}^{n}h_{jj}h_{ii}, \quad
\Tr (H^2) = \sum_{i,j=1}^{n}h^2_{ij} \quad
\mbox{and} \quad
\sum_{i<j} h^2_{ij}
.
\]
\end{lemma}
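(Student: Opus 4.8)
The plan is to decompose the space of homogeneous quadratic polynomials on $\Sn$ into irreducible $W(\mathsf{D}_n)$-modules and count the multiplicity of the trivial representation, which will turn out to be exactly $3$ for $n\ge 5$. Since $\Sn$ has dimension $\binom{n+1}{2}$, the space of quadratic forms on it is $\mathrm{Sym}^2(\Sn^*)$; as $W(\mathsf{D}_n)$ acts by real orthogonal substitutions $H\mapsto M^tHM$, it suffices to identify the invariant quadratic forms, i.e.\ the fixed subspace of $\mathrm{Sym}^2(\Sn^*)^{W(\mathsf{D}_n)}$. First I would split the coordinates on $\Sn$ into the $n$ diagonal entries $h_{ii}$ and the $\binom n2$ off-diagonal entries $h_{ij}$ ($i<j$), and observe that the subgroup $(\Z/2\Z)^{n-1}$ of even sign changes already forces any invariant quadratic to contain no cross terms between $h_{ij}$ and $h_{k\ell}$ unless $\{i,j\}=\{k,\ell\}$, and no cross terms between diagonal and off-diagonal variables, and no cross terms $h_{ii}h_{jj}$ with $i\ne j$ beyond a symmetric combination — here one must be slightly careful because the sign changes are constrained to an \emph{even} number of $-1$s, which is exactly why the hypothesis $n\ge 5$ (or at least $n\ge 3$) enters: with enough coordinates one can still isolate each monomial by choosing an even sign pattern, whereas for very small $n$ extra invariants can appear.

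Concretely, the argument runs as follows. Write a general invariant quadratic $Q(H)=\sum \alpha_{ij,k\ell}\,h_{ij}h_{k\ell}$. Applying a diagonal matrix with $-1$ in positions $i$ and $j$ (an even number) sends $h_{ij}\mapsto h_{ij}$ but $h_{ik}\mapsto -h_{ik}$ for $k\ne i,j$, and $h_{ii}\mapsto h_{ii}$; by averaging over a well-chosen family of such sign changes one kills every coefficient except those of $h_{ii}h_{jj}$ (all $i,j$), $h_{ij}^2$ ($i<j$), and possibly $h_{ij}h_{k\ell}$ with $\{i,j\}\cap\{k,\ell\}=\emptyset$. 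An additional sign change on the two indices $i,k$ (again even, and here $n\ge 4$ guarantees a fourth index is available to keep the count even if needed, while $n\ge 5$ gives comfortable room) then kills the last family. So $Q$ is supported on $\{h_{ii}h_{jj}\}\cup\{h_{ij}^2\}$. Next, invariance under $S_n$ (coordinate permutations) forces the coefficient of $h_{ii}h_{jj}$ to depend only on whether $i=j$, and the coefficient of $h_{ij}^2$ to be the same for all pairs $i<j$. Hence
\[
Q(H)=a\sum_{i}h_{ii}^2 + b\sum_{i\ne j}h_{ii}h_{jj} + d\sum_{i<j}h_{ij}^2,
\]
a three-parameter family. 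Finally I would exhibit the three displayed polynomials as an explicit basis: $(\Tr H)^2=\sum_i h_{ii}^2+\sum_{i\ne j}h_{ii}h_{jj}$, $\Tr(H^2)=\sum_i h_{ii}^2+2\sum_{i<j}h_{ij}^2$, and $\sum_{i<j}h_{ij}^2$, and check that their $3\times 3$ coefficient matrix in the variables $(\sum h_{ii}^2,\ \sum_{i\ne j}h_{ii}h_{jj},\ \sum_{i<j}h_{ij}^2)$ is nonsingular, so they span the same space. This yields the claim.

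The main obstacle is the sign-change bookkeeping: because $W(\mathsf{D}_n)$ contains only the \emph{even}-sign-flip diagonal matrices, a single transposition-type sign pattern $\mathrm{diag}(-1,-1,1,\dots,1)$ is allowed but $\mathrm{diag}(-1,1,\dots,1)$ is not, so one cannot naively flip one coordinate at a time to separate monomials. The fix is to always flip signs in pairs and, when one needs the net effect of an odd flip on a particular variable, to compose two pair-flips sharing a common index; checking that this still annihilates all the unwanted coefficients (and that enough indices are present to do so, which is where $n\ge 5$ is invoked) is the one genuinely case-sensitive part of the proof. Everything else is the standard reduction "invariant under $(\Z/2\Z)^{n-1}\rtimes S_n$ $\Rightarrow$ supported on the symmetric power-sum-type monomials $\Rightarrow$ three-dimensional", together with an elementary change-of-basis verification.
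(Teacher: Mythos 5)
Your overall strategy is the same as the paper's: list the types of monomials in the $h_{ij}$, use permutation invariance to equalize coefficients within each type, and use the even sign flips to annihilate the unwanted types. However, there is a concrete error at the one step you yourself identify as the delicate one. To kill the monomials $h_{ij}h_{k\ell}$ with $\{i,j\}\cap\{k,\ell\}=\emptyset$ you propose the sign change at the two indices $i,k$. But under $M=\mathrm{diag}$ with $-1$ at positions $i$ and $k$ one has $h_{ij}\mapsto -h_{ij}$ \emph{and} $h_{k\ell}\mapsto -h_{k\ell}$, so the product $h_{ij}h_{k\ell}$ is \emph{fixed}, not negated; this sign change proves nothing about those coefficients. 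The correct move is to flip $i$ together with a fifth index $m\notin\{i,j,k,\ell\}$, which negates $h_{ij}$ while fixing $h_{k\ell}$ — and this is precisely where $n\ge 5$ is genuinely needed, not merely "comfortable room" as your parenthetical suggests. Indeed, for $n=4$ the symmetric sum $h_{12}h_{34}+h_{13}h_{24}+h_{14}h_{23}$ is invariant under all even sign flips (each monomial picks up $(-1)^{|S|}=+1$) and under $S_4$, so the lemma is false for $n=4$: the invariant space is four-dimensional. Your hedge "(or at least $n\ge 3$)" in the first paragraph points the same wrong way. The paper's proof records this $n=4$ exception explicitly.

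Everything else in your write-up is fine and matches the paper: the other non-invariant types ($h_{ii}h_{ij}$, $h_{ii}h_{jk}$, $h_{ij}h_{ik}$) are each negated by a suitable flip of a pair of indices already available for $n\ge 3$ or $4$, permutation invariance reduces to the three orbit sums $\sum_i h_{ii}^2$, $\sum_{i\ne j}h_{ii}h_{jj}$, $\sum_{i<j}h_{ij}^2$, and the displayed polynomials form a basis of that three-dimensional space by an elementary nonsingularity check. Once you replace the faulty sign pattern by the flip $\{i,m\}$ with $m$ a fifth index, the proof is complete and essentially identical to the paper's.
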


\begin{proof}
Since we are not aware of a pinpoint reference for this statement, we give a short argument here for the convenience of the reader. The homogeneous quadratic polynomials on $\Sn$  
have seven types of monomials
(where different indices are actually chosen to be different):
\[
h_{ii}^2, \quad
h_{ii} h_{jj},  \quad
h_{ii} h_{ij}, \quad  
h_{ii} h_{jk}, \quad 
h_{ij}^2, \quad  
h_{ij} h_{ik}, \quad  
h_{ij} h_{kl}
\]
Note that there are less of these monomials for $n=2,3$.

From the invariance towards permutation matrices we can conclude 
that coefficients in front of any given type of monomials have to be the same. 
From the invariance towards 
diagonal matrices with an even number of $-1$s (and $1$s otherwise)
we then deduce that only monomials of the three types 
$h_{ii}^2$, $h_{ii}h_{jj}$ and $h_{ij}^2$ are invariant under the Weyl group of $\mathsf{D}_n$.
Among the others, some monomials are mapped to their negatives.
The only exception is the case $n=4$, where also the set of monomials of the type
$h_{ij} h_{kl}$ is invariant under the action of the group.
\end{proof}

Note that the lemma and its proof can be adapted to the description of the space of quadratic $G$-invariant differential operators on functions with matrix argument. In particular, this space has dimension $3$. Using the local system of coordinates $h_{ij}, \, 1 \leq i \leq j \leq n$, of $\Sn$ and denoting by $\partial_{ij}$ the partial derivative with respect to $h_{ij}$, a spanning system is given by
\begin{align*}
\delta_{1}&=\dfrac{1}{n(n-1)}\sum_{i<j}\partial_{ii}\partial_{jj},\\
\delta_{2}&=\dfrac{1}{2n}\sum_{i}\partial^2_{ii}-\dfrac{1}{n(n-1)}\sum_{i<j}\partial_{ii}\partial_{jj}\quad\text{ and } \\
\delta_{3}&=-\dfrac{1}{n}\sum_{i}\partial^2_{ii}+\dfrac{2}{n(n-1)}
\sum_{i<j}\partial_{ii}\partial_{jj}+\dfrac{1}{n(n-1)}\sum_{i<j}\partial_{ij}^2.
\end{align*}

This particular basis satisfies the relations $$\delta_i(F_j)=\delta_{ij},\quad 1 \leq i,j \leq 3$$  for $$F_1(H)=(\Tr 
H )^2=\left(\sum_{i=1}^n h_{ii}\right)^2,$$ $$F_2(H)=\Tr H^2=\sum_{i=1}^n h_{ii}^2+2\displaystyle\sum_{1\leq i<j\leq n}h_{ij}^2$$ and $$F_3(H)=\displaystyle\sum_{1 \leq i<j\leq n}  h_{ij}^2.$$

For any positive $r$, the polynomial $\sum_{w \in \Lambda(r)}H[w]^2$
 is a quadratic $G$-invariant polynomial in $H$. As such, it is a linear combination 
\begin{equation}\label{abc} 
\sum_{w \in \Lambda(r)}H[w]^2=\alpha_r F_1(H)
+\beta_r F_2(H) +\gamma_r F_3(H)
\end{equation}
for some constants $\alpha_r$, $\beta_r$ and $\gamma_r$ to be computed. 
To compute the constants $\alpha_r$, $\beta_r$ and $\gamma_r$ in \eqref{abc}, it suffices to evaluate  $\delta_i(\sum_{w \in \Lambda(r)}H[w]^2)$ for $1\leq i \leq 3$: setting $Z_r =\sum_{w \in \Lambda(r)}\left(\sum_{i=1}^n w_i^4\right)$, one has

\begin{align*}
\alpha _r &=\delta_1(\sum_{w \in \Lambda(r)}H[w]^2)=\dfrac{1}{n(n-1)}\left(r^4\vert \Lambda(r)\vert - Z_r\right)\\
\beta_r &=\delta_2(\sum_{w \in \Lambda(r)}H[w]^2)=\dfrac{1}{n-1}Z_r -\dfrac{1}{n(n-1)}r^4\vert\Lambda(r)\vert\\
\gamma_r &=\delta_3(\sum_{w \in \Lambda(r)}H[w]^2)=-2\dfrac{n+2}{n(n-1)}Z_r+\dfrac{6}{n(n-1)}r^4\vert\Lambda(r)\vert.
\end{align*}
\bigskip

We are now in the position to estimate $III(r)$.
Recall that we restrict to $H$ with $\Tr H =0$, in which case $F_1(H)=0$. Using the above formulas, the relation $\Tr H^2= \sum_i h_{ii}^2 + 2\sum_{i<j}h_{ij}^2$ and formula \eqref{eq:h2}, we get
\begin{align*}
\dfrac{1}{m \, \nu_r}III(r)&=\left(c\beta_r+\dfrac{c\gamma_r}{2}-\dfrac{r^2}{n}\vert\Lambda(r)\vert\right)\sum_{i<j}h_{ij}^2 + \dfrac{1}{2}\left(c\beta_r-\dfrac{r^2}{n}\vert\Lambda(r)\vert\right)\sum_i h_{ii}^2\\
&=\left(\dfrac{2c}{n(n-1)}\left(r^4\vert \Lambda(r)\vert - Z_r\right)-\dfrac{r^2}{n}\vert\Lambda(r)\vert\right)\sum_{i<j}h_{ij}^2\\& \qquad  + \dfrac{1}{2}\left(\dfrac{c}{n-1}\left(Z_r-\dfrac{r^4}{n}\vert \Lambda(r)\vert\right)-\dfrac{r^2}{n}\vert\Lambda(r)\vert\right) \sum_i h_{ii}^2.
\end{align*}
 
In order that $III(r)$ be positive, it is enough that the coefficients of $ 
\sum_{i<j}h_{ij}^2 $ and $\sum_i h_{ii}^2 $ are positive. This is of course impossible 
for small $c$, but as we show below, it is achievable for big enough $c$. To see this, we 
introduce the polynomial $$P(x)=\sum_{i=1}^n x_i^4 
-\dfrac{3}{n+2}\left(\sum_{i=1}^n x_i^2\right)^2$$ which is readily seen to be 
harmonic. As a consequence of Proposition \ref{mod}, the average theta series $f(\tau)=\theta_{\Lambda,P}(\tau)$ is a 
cusp modular form of weight $k=\dfrac{n}{2}+4$, and its Fourier coefficients $a_r(f)$ are "small", in a sense to be 
made more precise. 
Finally, from the relation 
$Z_r=a_r(f)+\dfrac{3}{n+2}r^4\vert \Lambda(r)\vert$, we can rewrite the coefficients of
$ \sum_{i<j}h_{ij}^2$ and $\sum_i h_{ii}^2$ in the expression for 
$\dfrac{1}{m\, \nu_r}III(r)$ as

\begin{equation}\label{coeff1}
\dfrac{2c}{n(n+2)}r^4\vert\Lambda(r)\vert\left[\left( 
1-\dfrac{n+2}{2cr^2}\right)-\dfrac{n+2}{n-1}\,\dfrac{a_r(f)}{r^4\vert\Lambda(r)\vert}\right]
\end{equation}
and 
\begin{equation}\label{coeff2}
\dfrac{c}{n(n+2)}r^4\vert\Lambda(r)\vert\left[\left( 
1-\dfrac{n+2}{2cr^2}\right)+\dfrac{n(n+2)}{2(n-1)}\,\dfrac{a_r(f)}{r^4\vert\Lambda(r)\vert}\right].
\end{equation}
Note that if all shells $\Lambda(r)$ were spherical $4$-designs, then
$a_r(f)$ would be zero, and the above coefficients would be positive
for any $c>\dfrac{n+2}{4}$. As mentioned before, not all shells of
$\mathsf{D}_n^{+}$ do have the $4$-design property. We can nevertheless 
obtain the same conclusion, using some classical estimates on the
growth of the coefficients of cusp forms:

\begin{lemma}\label{cusp}
	For any $r>0$ such that the shell $\Lambda(r)$ of $\mathsf{D}_n^+$ is non-empty one has
	$$\dfrac{a_r(f)}{r^4\vert\Lambda(r)\vert}=\mathcal{O}\left(r^{-\frac{n}{2}+2}\right).$$
\end{lemma}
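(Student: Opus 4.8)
The quantity $a_r(f)$ is the $r$-th Fourier coefficient of the cusp form $f=\theta_{\Lambda,P}$, which by Proposition~\ref{mod} has weight $k=\frac{n}{2}+4$ for the principal congruence group $\Gamma(4N)$ (where $N$ is the level of $\mathsf{D}_n$; here $N=4$). The plan is to combine the classical Hecke/Deligne-type bound on coefficients of cusp forms with a lower bound on the denominator $r^4|\Lambda(r)|$ coming from the fact that $|\Lambda(r)|$ is, up to a bounded factor, a coefficient of a positive-weight theta series, hence grows like a power of $r$. First I would recall the general principle (Hecke's bound, which suffices here and needs no deep input) that for a cusp form $g$ of weight $k$ on a congruence subgroup, its Fourier coefficients satisfy $a_\mu(g)=\mathcal{O}(\mu^{k/2})$; applied to $f$ with the normalization used here (the Fourier expansion is in powers of $e(\|x\|^2\tau/2)$, so the "index" attached to the shell of radius $r$ is $\mu=r^2/2$), this gives $a_r(f)=\mathcal{O}\bigl(r^{2k}\bigr)^{1/2}=\mathcal{O}\bigl(r^{k}\bigr)=\mathcal{O}\bigl(r^{\frac{n}{2}+4}\bigr)$.

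\textbf{Bounding the denominator.} Next I would show $r^4|\Lambda(r)|$ is bounded below by a constant times $r^{n+4}$ for all $r$ with $\Lambda(r)$ non-empty. The point is that the number of representations $|\Lambda(r)|$ is, up to the bounded weight factor $\nu_r^{-1}\in\{1,2\}$, the $r$-th coefficient of the average theta series $\theta_{\Lambda}$ of $\mathsf{D}_n^+$ (with trivial spherical part). That theta series is a modular form of weight $n/2>0$, and its Eisenstein part has coefficients growing like $\mu^{n/2-1}=(r^2/2)^{n/2-1}\asymp r^{n-2}$; the cusp-form correction is of lower order (again by Hecke, $\mathcal{O}(r^{n/2})$, which is $o(r^{n-2})$ once $n\geq 2$), so $|\Lambda(r)|\asymp r^{n-2}$ on the support, i.e. there is a positive constant $c_n$ with $|\Lambda(r)|\geq c_n r^{n-2}$ whenever $\Lambda(r)\neq\emptyset$. (Alternatively, one can argue more elementarily: the non-empty shell radii are spaced by $O(1/r)$ on average and a standard volume-packing estimate for $\mathsf{D}_n$ gives the same power of $r$; I would use whichever is cleanest, but the modular-forms argument is the one consistent with the paper's toolkit.) Hence $r^4|\Lambda(r)|\geq c_n r^{n+2}$.

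\textbf{Conclusion.} Dividing the two estimates,
\[
\frac{a_r(f)}{r^4|\Lambda(r)|}
= \frac{\mathcal{O}\bigl(r^{\frac{n}{2}+4}\bigr)}{\Omega\bigl(r^{n+2}\bigr)}
= \mathcal{O}\bigl(r^{\frac{n}{2}+4-(n+2)}\bigr)
= \mathcal{O}\bigl(r^{-\frac{n}{2}+2}\bigr),
\]
which is exactly the claimed bound.

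\textbf{Main obstacle.} The subtle point is not the cusp-form bound itself (Hecke's elementary estimate is enough and does not require Deligne) but rather making the lower bound $|\Lambda(r)|\gtrsim r^{n-2}$ uniform over \emph{all} non-empty shell radii $r$, including the sparse small ones and the half-integral ones. One has to be careful that the Eisenstein series of $\theta_{\mathsf{D}_n^+}$ genuinely has a non-vanishing coefficient at every radius in the support of the theta series (so that the main term does not accidentally cancel), and that the implied constants do not depend on $r$. Checking this cleanly — e.g. by passing to the explicit structure of $\mathsf{D}_n^+$ as a union of two cosets of $\mathsf{D}_n$ and using the product formula for the Eisenstein coefficients, or by invoking a general positivity statement for Eisenstein coefficients of theta series of positive-definite lattices — is where the real work lies; everything else is bookkeeping with powers of $r$.
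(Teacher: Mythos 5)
Your proof is correct and follows essentially the same route as the paper: the Hecke-type bound $a_r(f)=\mathcal{O}(r^{n/2+4})$ for the numerator, and the representation-number asymptotic $|\Lambda(r)|\asymp r^{n-2}$ uniformly over non-empty shells for the denominator (the paper obtains the latter, including your "main obstacle" of uniformity and non-vanishing of the main term, by citing Iwaniec's Corollary 11.3 and treating the half-integral shells as shells of the lattice $\mathsf{D}_n^{*}$). The only blemish is the stray exponent $r^{n+4}$ where you mean $r^{n+2}$; your concluding computation uses the correct power.
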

\begin{proof}
	Using elementary bounds on the size of coefficients of cusp forms (see \eg 
	\cite[(5.7)]{MR1474964}) we see that 
	$$a_r(f)=\mathcal{O}\left(r^{\frac{n}{2}+4}\right).$$
	As for the size of $\Lambda(r)$ we can use classical estimates on the number of 
	representations by quadratic forms (see \eg \cite[chapter 11 ]{MR1474964}) . For 
	shells $\Lambda(r)$ which are contained in $\mathsf{D}_n^{+}$, corresponding to $r$ such 
	that $r^2$ is integral, one can apply Corollary 11.3 of \cite{MR1474964}  to 
	conclude that $\vert\Lambda(r)\vert \asymp r^{n-2}$. 
         For shells contained in 
        $(-\frac{\mathbb{1}}{2} + \mathsf{D}_n) \cup
        (\frac{\mathbb{1}}{2} + \mathsf{D}_n)$ 
        the same argument 
	applies since these shells are indeed shells of the lattice $\mathsf{D}_n^{*}$. 
	Altogether, we obtain the desired estimate for the quotient 
	$\dfrac{a_r(f)}{r^4\vert\Lambda(r)\vert}$.
\end{proof}

\section{Concluding remarks}  \label{sec:conclusion}

\begin{theorem} \label{thm:final}
	Let $n$ be an odd integer $\geq 9$. Then there exists a constant $c_n$ such that $\mathsf{D}_n^{+}$ is locally $f_c$-optimal for any $c > c_n$.
\end{theorem}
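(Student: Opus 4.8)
The plan is to establish that the Hessian~\eqref{eq:splithess} of $E_{f_c}(H,\mathbf t)$ at $\mathsf{D}_n^+$ is positive definite on $\mathcal T\times\R^{mn}$ modulo its unavoidable kernel, namely the global translations $\mathbf t=(v,\ldots,v)$. Since $\mathsf{D}_n^+$ is already $f_c$-critical for every $c>0$ (Section~\ref{sec:four}, via Theorem~\ref{th:periodic-critical}), this positivity gives that $\mathsf{D}_n^+$ is a strict local minimum of the energy among $m$-periodic sets of the same density, \ie locally $f_c$-optimal. I would exploit the splitting of the Hessian into the pieces $I(r)$, $II(r)$, $III(r)$. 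Because every shell $\Lambda_{t^0_i}(r)$ of $\mathsf{D}_n^+$ is a spherical $3$-design, the computation of Section~\ref{mt} yields $II(r)=0$ for all $r>0$; hence the Hessian is block diagonal, with a $\mathbf t$-block assembled from the $I(r)$ and an $H$-block assembled from the $III(r)$, and it suffices to show each block is positive definite (the first on $\R^{mn}$ modulo translations, the second on $\mathcal T$) for all sufficiently large $c$.

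For the $\mathbf t$-block I would invoke Section~\ref{ptt}: the minimal square norm occurring in $\mathsf{D}_n^+$ equals $2$ (using $n\geq 8$, since $\|\tfrac{\mathbb{1}}{2}\|^2=n/4$), so $I(r)\geq 0$ for every shell once $c\geq n/4$. Using Lemma~\ref{lem:perm} --- that $k\mapsto\sigma(i,k)$ is a bijection for each fixed $i$, permuting the two $\mathsf{D}_n$-cosets in a controlled way --- the sum $\sum_{r>0}I(r)e^{-cr^2}$ rewrites as $a\sum_{\text{same coset}}\|t_i-t_j\|^2+b\sum_{\text{different cosets}}\|t_i-t_j\|^2$, the sums being over ordered pairs $i,j$, with coefficients $a,b>0$ (for $c>n/4$ both arise as sums of positive terms, over the integral shells and over the half-integral shells respectively). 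Both quadratic forms are positive semidefinite, and the second already vanishes only on global translations, so the $\mathbf t$-block is positive definite modulo translations whenever $c\geq n/4$.

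The $H$-block is the heart of the matter. On $\{\Tr H=0\}$ it equals $P(c)\sum_{i<j}h_{ij}^2+Q(c)\sum_i h_{ii}^2$, where $P(c)$ and $Q(c)$ are the $m\nu_r e^{-cr^2}$-weighted sums over $r$ of the coefficients \eqref{coeff1} and \eqref{coeff2}; since on $\{\Tr H=0\}$ the forms $\sum_{i<j}h_{ij}^2$ and $\sum_i h_{ii}^2$ are positive semidefinite with kernels meeting only at $H=0$, it is enough to show $P(c)>0$ and $Q(c)>0$ for $c$ large. For the shells with $r>R_0$ I would use Lemma~\ref{cusp} --- this is where $n\geq 9$ enters, so that $-n/2+2<0$ and $a_r(f)/(r^4|\Lambda(r)|)\to 0$ --- choosing $R_0$ so that this quantity is small enough, the coefficients \eqref{coeff1} and \eqref{coeff2} are both positive for all $r>R_0$ as soon as $c>(n+2)/2$, so the $r>R_0$ part of $P(c)$ and $Q(c)$ is positive. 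The finitely many shells with $r\leq R_0$ cannot be handled individually: for example the shell consisting of the $2^n$ vectors $(\pm\tfrac12,\ldots,\pm\tfrac12)$ at $r^2=n/4$ contributes, on $\{\Tr H=0\}$, a positive multiple of $(c-2)\sum_{i<j}h_{ij}^2-\sum_i h_{ii}^2$, whose diagonal part is negative for every $c$. Instead I would use that among these finitely many shells the smallest, $r^2=2$, carries the largest Gaussian weight $e^{-cr^2}$, so that as $c\to\infty$ the partial sum $\sum_{r\leq R_0}(\cdots)e^{-cr^2}$ is dominated by its $r^2=2$ term (crude polynomial bounds in $r$ on $|\Lambda(r)|$, together with the gap between $2$ and the next square norm, make the remaining contributions $o$ of the $r^2=2$ one); and a direct evaluation on the root shell $\Lambda(2)=\{\pm e_i\pm e_j\}_{i<j}$ shows that $III(2)$ restricts on $\{\Tr H=0\}$ to a positive combination of $\sum_{i<j}h_{ij}^2$ and $\sum_i h_{ii}^2$ for $c$ large, hence to a positive-definite form. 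Combining the two regimes gives $P(c),Q(c)>0$ for $c$ beyond some $c_n$, so the $H$-block --- and therefore the whole Hessian modulo translations --- is positive definite, which is the asserted local $f_c$-optimality. The main obstacle is exactly this last point: an individual $III(r)$ need not be positive semidefinite, so the positivity of the $H$-block must be wrung out of the Gaussian weights, with Lemma~\ref{cusp} disposing of the infinitely many far shells and the dominant $r^2=2$ term of the finitely many near ones.
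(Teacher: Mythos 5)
Your proposal takes the same overall route as the paper --- the decomposition of the Hessian into $I(r)$, $II(r)$, $III(r)$, the lower bound $c\geq n/4$ for the translational block from Section~\ref{ptt}, the vanishing of the mixed block via the $3$-design property from Section~\ref{mt}, and control of the lattice block through \eqref{coeff1}, \eqref{coeff2} and Lemma~\ref{cusp} --- but your handling of the small shells is a genuine and necessary refinement rather than a cosmetic one. The paper's proof asserts that \eqref{coeff1} and \eqref{coeff2} become positive once $c$ is large, which, read shell by shell, is false: for the shell $\Lambda(r)$ with $r^2=n/4$, consisting of the $2^n$ vectors $(\pm\tfrac{1}{2},\ldots,\pm\tfrac{1}{2})$, one has $Z_r=\tfrac{r^4}{n}\vert\Lambda(r)\vert$, hence $\beta_r=0$ and the coefficient of $\sum_i h_{ii}^2$ in $\tfrac{1}{m\nu_r}III(r)$ equals $-\tfrac{r^2}{2n}\vert\Lambda(r)\vert=-\tfrac{2^n}{8}$ for every $c$ (equivalently, the bracket in \eqref{coeff2} is exactly $-\tfrac{n+2}{2cr^2}<0$), which confirms your evaluation $(c-2)\sum_{i<j}h_{ij}^2-\sum_i h_{ii}^2$ up to a positive factor. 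Positivity of $\sum_r III(r)e^{-cr^2}$ therefore cannot be obtained coefficientwise over individual shells; it requires precisely your two-regime argument, with Lemma~\ref{cusp} disposing of $r>R_0$ and the $r^2=2$ root shell (whose two coefficients grow like $c\,e^{-2c}$ and are positive for large $c$) dominating the $O(e^{-cn/4})$ negative contributions of the finitely many remaining near shells --- and this is the step where $n\geq 9$, i.e.\ $n/4>2$, is truly indispensable, not only in Lemma~\ref{cusp} (for $n=5,7$ the negative half-integral shell would carry the larger Gaussian weight and dominate instead). Your explicit identification of the kernel of the translational block with the global translations, via the splitting into same-coset and different-coset quadratic forms, is likewise a precision the paper leaves implicit. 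In short: same strategy, but your version supplies the dominance argument that the paper's one-line conclusion glosses over.
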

\begin{proof}
	This is mainly the collection of facts proven before: we know from Section~\ref{ptt} 
	that the purely translational part of the hessian is $>0$ as
        soon as $c >\frac{n}{4}$ 
and that the mixed terms vanish (Section~\ref{mt}). As for the pure lattice changes, the sign of their contribution is governed by that of \eqref{coeff1} and \eqref{coeff2}, which is positive  if $c$ 
	is big enough, thanks to  Lemma \ref{cusp}.
	\end{proof}

For $n=9$, the result of Theorem~\ref{thm:final} is of course not fully
satisfactory as one would expect local $f_c$-optimality to hold for
any $c>0$, in accordance with the conjecture and experimental results
about $\mathsf{D}_9^{+}$ mentioned at the beginning of this paper. A strategy
to get such a universal local optimality result --- which we used in
\cite{MR2889159} for the lattices $\mathsf{A}_2$, $\mathsf{D}_4$ 
an $\mathsf{E}_8$ --- is roughly speaking as follows:  
First one proves local extremality for all $c$ bigger than an
\emph{explicit} $c_0$ (as small as possible, but certainly not $0$!), 
and then, if $c_0$ is small enough, one can use self-duality together 
with the Poisson summation formula to switch from  "big $c$'' to "small
$c$" (see \cite{MR2889159} for details). In our situation here, there are
two difficulties in applying this strategy. First, as explained in
\cite{MR3289409}, there is no good notion of duality, let alone
self-duality, and the Poisson summation formula for general periodic
sets. This first obstruction seems unavoidable, and incidentally one
does not expect universal local optimality of $\mathsf{D}_n^{+}$ for general
$n\geq 8$.  But fortunately 
the $2$-periodic set $\mathsf{D}_n^{+}$ (with $n$ odd) 
is precisely one instance of a non-lattice configuration
for which a formal self-duality holds together with a Poisson formula
(see~\cite{PhysRev09}). 
So it is not hopeless to overcome this first obstruction in this
particular case. 
The second impediment, not theoretical in nature but really critical
in practice, is the need for an \emph{explicit} threshold $c_0$. To
this end, one needs an effective version of Lemma \ref{cusp}, \ie
effective bounds for the coefficients of the cusp form involved, in
the spirit of \cite{MR2854563} for instance. 
But those seem to be quite difficult in our case, 
given that the cusp form $\theta_{\Lambda,P}(\tau)$ has half-integral weight.
Here, further research appears to be necessary.

\section*{Acknowledgments}

Both authors were supported 
by the Erwin-Schr\"odinger-Institute (ESI) during a stay in fall 2014 for
the program on Minimal Energy Point Sets, Lattices and Designs.
The second author gratefully acknowledges support by 
DFG grant SCHU 1503/7-1. 
The authors like to thank Jeremy Rouse,
Frieder Ladisch and Robert Sch\"uler for several valuable
remarks.



\providecommand{\bysame}{\leavevmode\hbox to3em{\hrulefill}\thinspace}
\providecommand{\MR}{\relax\ifhmode\unskip\space\fi MR }
\providecommand{\MRhref}[2]{%
	\href{http://www.ams.org/mathscinet-getitem?mr=#1}{#2}
}
\providecommand{\href}[2]{#2}

\end{document}